\theoremstyle{plain}
\newtheorem{theorem}{Theorem}[section]
\newtheorem{corollary}[theorem]{Corollary}
\newtheorem{prop}[theorem]{Proposition}
\newtheorem{lemma}[theorem]{Lemma}
\theoremstyle{definition}
\newtheorem{definition}[theorem]{Definition}
\newtheorem{remark}[theorem]{Remark}
\newcommand{\Rb}{\ensuremath{\mathbb R}}
\newcommand{\Cc}{\ensuremath{\mathcal C}}
\newcommand{\Mc}{\ensuremath{\mathcal M}}
\def\veps{\varepsilon}
\def\vphi{\varphi}
\begin{document}

\title{Long-time asymptotic of the Lifshitz--Slyozov equation with nucleation}
\author[$^*$]{Juan Calvo}
\affil[$^*$]{\footnotesize Universidad de Granada, Avda. Hospicio s/n, 18071, Granada, Spain.  Research Unit ``Modeling Nature'' (MNat). E-mail: juancalvo@ugr.es}
\author[$\dag$]{Erwan Hingant}
\affil[$\dag$]{\footnotesize LAMFA, Université de Picardie Jules Verne, CNRS, 33 rue Saint-Leu, 80039 Amiens, France. E-mail: erwan.hingant@u-picardie.fr}
\author[$\ddag$]{Romain Yvinec}
\affil[$\ddag$]{\footnotesize Université Paris-Saclay, Inria, Centre Inria de Saclay, 91120, Palaiseau, France \& PRC, INRAE, CNRS, Université de Tours, 37380, Nouzilly, France. E-mail: romain.yvinec@inrae.fr}

\date{\normalsize \today}

\maketitle

\begin{abstract} 
We consider the Lifshitz--Slyozov model with inflow boundary conditions of nucleation type. We show that for a collection of representative rate functions the size distributions approach degenerate states concentrated at zero size for sufficiently large times. The proof relies  on monotonicity properties of some quantities associated to an entropy functional. Moreover, we give numerical evidence on the fact that the convergence rate to the goal state is algebraic in time. Besides their mathematical interest, these results can be relevant for the interpretation of experimental data.

\medskip

\noindent \textit{2020 Mathematics Subject Classification. 35Q92; 35B40; 35L04} .

\noindent \textit{Key words and phrases. Long-time behavior, Lifshitz-Slyozov equation, Entropy functional, Nucleation theory.}

\end{abstract}

\maketitle
\section{Introduction}
\label{sec:intro}

In this work we study the long time behavior of the Lifshitz--Slyozov (LS) model with nucleation boundary conditions. The model reads:

\begin{equation} 
\label{eq:LS-N}
 \frac{\partial f(t,x)}{\partial t} + \frac{\partial \{(a(x)u(t)-b(x))f(t,x)\}}{\partial x} = 0  \vphantom{\int} \, , \quad t>0\,, \ x\in(0,\infty)\,,
 \end{equation}
 with
\begin{equation}
\label{eq:LS-con}
 u(t) + \int_0^\infty xf(t,x)\, dx = \rho \, , \quad t>0\\[0.8em]
\end{equation}
for some given $\rho>0$, subject to the boundary condition 
\begin{equation}
\label{eq:bc}
((a(x)u(t) - b(x))f(t,x))_{|x=0} = \mathfrak n (u(t)) \,, \quad t \in \{ s>0 \, : \, u(s) > \Phi_0\}
\end{equation}
and the initial condition
\begin{equation}
\label{eq:LS-Nini}
f(0,x)=f^{in}(x)\,, \qquad x\in (0,\infty)\:.
\end{equation}

The LS model describes the temporal evolution of a mixture of monomers and aggregates that undergo the following interactions: a monomer can join an existing aggregate of size $x$, with an attachment rate $a(x)$, and a monomer can detach from an existing aggregate of size $x$, with a detachment rate $b(x)$. The variable $x$ describes the size of the aggregates, so that $f(t,x)$ is the number density of aggregates at time $t$, whereas $u(t)$ stands for monomer concentration at time $t$. Equation \eqref{eq:LS-con} simply encodes the fact that the total mass $\rho$ is preserved. 

Depending on the specific rates $a(x)$ and $b(x)$, the model may or may not need a boundary condition. No boundary condition is needed for the original Lifshitz--Slyozov version of the model \cite{Lifshitz61}, and this is also the case for the various instances of ``Ostwald ripening'' that have been analyzed in the literature, e.g. \cite{Collet00,Collet02a,Collet04,Laurencot01,Niethammer00,Niethammer03,Niethammer05}. In this article we are interested in situations where the kinetic rates are such that a boundary condition is needed to make sense of the model. Classical examples are power-law rates like $a(x) \propto x^\alpha$ and $b(x)\propto x^\beta$ with $0\le \alpha \le \beta \le 1$. For the main applications we have in mind the nucleation rate follows a mass action kinetics, that is $\mathfrak n(u) \propto u^{i_0}$ with $i_0\in \mathbb{N}^*$.

The boundary condition \eqref{eq:bc} encodes the creation of new aggregates from the available pool of monomers. We understand this as an effective description of a nucleation process. 
A boundary condition of the form \eqref{eq:bc} has been deduced from appropriate scaling limits in the case of a second order nucleation kinetics, see \cite{Collet02,Deschamps17} for details. Higher order mass- action-kinetics can also be used to describe the nucleation process in a  phenomenological way \cite{Prigent12,Xue08,Bishop84}; such boundary conditions arise as scaling limits of modified versions of the Becker--D\"oring model. It is also interesting to note that some nucleation boundary conditions were derived in \cite{Deschamps17} that do not follow a mass action law kinetics. This can be relevant for the description of protein polymerization phenomena. All in all, this is a first step towards a number of important applications in the science of materials and in the field of neurodegenerative diseases \cite{Prigent12,Ross04,Ross05,Szavits-Nossan14}.

We prove below that under generic conditions, solutions will concentrate at vanishing aggregate sizes, while the concentration of available monomers drops down to the activation threshold $\Phi_0$ in \eqref{eq:bc}. Under more specific hypotheses we can get more precise information about the temporal rates at which this dynamics takes place. We complement this with a numerical investigation. The general picture that emerges points to the fact that the concentration dynamics is quite slow, actually taking place with algebraic rates (that depend on $\alpha, \beta$ and $\mathfrak n$). These results are in line with previously known results for the case of outgoing characteristics, which indicate that degenerate steady states are approached at an algebraic rate and with a universal profile, see \cite{Carrillo04,Collet02a}. 

Since the transient behavior for \eqref{eq:LS-N}--\eqref{eq:bc} spans a wide temporal scale, this model is thus  suitable for comparison with experimental data, e.g. those originating in protein polymerization experiments {\em in vitro}. We conjecture the asymptotic profile, after a suitable normalisation, to be universal. However, if we are interested in very long time scales the concentration behavior will take over and then it seems advisable to introduce corrections to the model in order to get a more realistic goal state, see for instance \cite{Calvo18,Collet02,Conlon10,Hariz99,Marder87,Meerson99,Velazquez98}.

\section{Statement of the problem and results}

Here below $\Rb_{+} = (0,+\infty)$ and $(1+x)dx$ denotes the measure with density $x\mapsto (1+x)$ with respect to the Lebesgue measure on $\Rb_{+}$. The space $L^{1}(\Rb_{+},(1+x)dx)$ denotes the space of integrable function w.r.t. the former measure. This space might be endowed with the weak topology denoted by $w$ whose convergence is characterized against bounded functions. Finally, $\Cc([0,\infty),w-L^1(\Rb_+,(1+x)dx))$ is the set of continuous functions from $[0,+\infty)$ into $L^{1}(\Rb_+,(1+x)dx)$ equipped with its weak topology that is, for such $f$,
\[ t\mapsto \int_{0}^{\infty} f(t,x) \vphi(x) (1+x)dx \]
is continuous on $[0,\infty)$ for all $\vphi\in L^{\infty}(0,\infty)$. Here and in the sequel $L^{p}$ refers to the standard Lebesgue spaces.
{We may add a subscript to it, so that the integration variable is made clear.}   
$\Cc^k$ is the space of continuous function whose $k^{th}$ derivatives are continuous, and $\Cc_c^k$ stands for its subspace consisting of compactly supported functions.  

In the remainder the rates $a$, $b$ and $\mathfrak n$ are assumed nonnegative and continuous functions on $[0,\infty)$ and such that
\begin{equation}
\label{H0} \tag{H0}
\forall x>0\,, \ a(x) > 0 \quad \text{ and }\quad \Phi(x) := \frac{b(x)}{a(x)} \to_{x\to 0^{+}} \Phi_{0} \,.
\end{equation}

We deal with global solutions for which the boundary condition is defined for all time, namely
solutions defined in the following sense:

\begin{definition}
  \label{def:sol}
  Let $0\le f^{\rm in}\in L^1(\Rb_+,(1+x)dx)$ and $\rho>0$. We say
  that $0\le f \in \Cc([0,\infty),w-L^1(\Rb_+,(1+x)dx))$ is a (global) solution to the LS equation
  with nucleation \eqref{eq:LS-N}-\eqref{eq:LS-Nini} provided that:
\begin{enumerate}
\item For all $t\geq 0$ and for every $\varphi\in \mathcal C^0([0,\infty))$ such that
  $\varphi'\in L^\infty(0,\infty)$, we have
  \begin{multline} \label{eq:LS-moment-equation}
    \int_0^\infty \varphi(x)f(t,x)\, dx =  \int_0^\infty \varphi(x)f^{\rm in}(x)\, dx 
    + \int_{0}^{t} \int_0^\infty (a(x)u(s)-b(x))\varphi'(x)f(s,x)\, dx \, ds \\
    + \int_{0}^{t} \varphi(0) \mathfrak n(u(s)) \, dt\,.
  \end{multline}
  
\item For all $t\geq 0$,
  \[u(t) \coloneqq  \rho - \int_0^\infty xf(t,x) dx > \Phi_{0}. \]
 \end{enumerate} 
\end{definition}

Note that for any smooth test function $\vphi$, a solution as defined above satisfies the following
moment equation:
\begin{equation}\label{eq:moment_eq_general}
\frac{d}{dt}\int_0^\infty \vphi(x) f(t,x) =  \vphi(0)\mathfrak n(u(t)) + \int_0^\infty (a(x)u(t) - b(x))\vphi'(x)f(t,x)\,.
\end{equation}

Our main hypothesis relies on the monotonicity of $\Phi$. Namely, we suppose that
\begin{equation}
\tag{H1}
\label{H1}
  \Phi \text{ is strictly increasing} \,.
\end{equation}
We shall assume some technical hypotheses as well: First, for any $\veps>0$,
\begin{equation}
\label{H:ab_deriv} \tag{H2}
a' \,, \ b'  \in L^{\infty}(\veps,\infty)\,, 
\end{equation}
which entails the existence of a constant $C>0$ such that, for all $x\geq 0$,
\begin{equation}
\label{eq:sublinear-rate}
a(x)+b(x) \leq C(1+x)\,.
\end{equation}
Next, we suppose
\begin{equation}
  \label{H:a} \tag{H3}
  \inf_{x\in (1,\infty)} a(x) > 0 \,, \text{ and } \frac 1 a \in L^{1}(0,1)\,,
\end{equation}
and that there exists a constant $C>0$ such that, for all $x\geq 0$,
\begin{equation}
\tag{H4}
\label{eq:bphi}
  b(x)\Phi(x) \geq \tfrac 1 C \min(1,x^2)\,.
\end{equation}
Concerning the nucleation rate, we assume that there exist two constants $c>0$ and $k_0\geq 1$ such
that for all $z\geq 0$
\begin{equation}
\tag{H5}
\label{H:nucleation}
\mathfrak n(z) \geq  c z^{k_0} \,.
\end{equation}
Finally, we will assume that the initial condition $f^{\rm in}$ belongs to $L^1(\Rb_+,(1+x)dx)$, is
nonnegative and moreover
\begin{equation}
\tag{H6}
\label{H:init}
  \int_0^\infty  \left(\int_0^x\Phi(z)dz + x^2 \right)f^{\rm in}(x)dx < \infty\:.
\end{equation}

The well-posedness of \eqref{eq:LS-N}-\eqref{eq:LS-Nini} was studied in \cite{Calvo21} under some
additional conditions of a technical nature -which are probably non-optimal. At any rate, taking
\eqref{H1} for granted we know for sure that solutions (should they exist) will be global in
time with $u(t)>\Phi_0$ for all times. Therefore, during the rest of the document we will only concentrate on the set of assumptions
needed for our analysis of the long time behavior.

Some comments are in order concerning our set of hypotheses. Besides ensuring well-posedness,
Hypothesis \eqref{H1} is the natural counterpart to the typical Ostwald Ripening phenomena
where, $\Phi(x) = x^{-1/3}$ for which the long-time asymptotics have been studied
e.g. \cite{Collet02a}. Note that \eqref{H:ab_deriv} is not that demanding; actually, blow-up
phenomena may take place for strictly superlinear rates \cite{Collet00}. Hypothesis \eqref{H:a}, has
been required in connection with the 
well-posedness of the problem, so that characteristics go back to $x=0$ in
finite time and 
render the boundary condition relevant. Condition \eqref{eq:bphi} is purely technical
and does not imply a 
strong restriction, recall the 
power law rates introduced in Section \ref{sec:intro}, namely $a(x) \propto x^\alpha$ and $b(x)\propto x^\beta$ with $0\le \alpha \le \beta \le 1$. Indeed,
such rates satisfy all our hypotheses.  Concerning the nucleation rate, any mass action
kinetics can be considered under \eqref{H:nucleation}. Finally, \eqref{H:init} imposes just a mild
technical requirement on the initial condition.

In order to ascertain the temporal evolution of the system, the number of aggregates constitutes an
important quantity that is defined as
\[M_0(t):=\int_0^\infty f(t,x)\, dx\,.\]
According to the boundary condition \eqref{eq:bc}, this evolves in time via
\[\frac{d M_0}{dt}=\mathfrak n(u)\,.\]

To discuss concentration phenomena, we define $\Mc^+_\rho([0,+\infty))$ the set of nonnegative Radon
measure on $[0,+\infty)$ with total variation less or equal to $\rho$. This space can be equipped
with the weak topology whose convergence corresponds to the convergence against any
continuous and bounded function.

Our main results in this document describe concentration phenomena for the long-time evolution of LS
equation with nucleation boundary conditions:
\begin{theorem}
\label{thm:largetime}
Under hypothesis \textup{(\ref{H0}-\ref{H:init})}, any global solution in the sense of Definition
\ref{def:sol} satisfies
\begin{itemize}
\item $\lim_{t\to +\infty} M_0(t)=+\infty$,
\item $\lim_{t\to +\infty} u(t)=\Phi_0$,
\item $\lim_{t\to +\infty} xf(t,x)dx = (\rho-\Phi_0) \delta_0$, weakly in $\Mc^+_\rho([0,\infty))$.
\end{itemize} 
\end{theorem}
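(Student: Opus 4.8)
The plan is to hang the whole argument on a single Lyapunov functional. Testing the moment identity \eqref{eq:moment_eq_general} with $\varphi(x)=x$ and using $b=a\Phi$ gives the consistency relation $u'(t)=-\int_0^\infty a(x)(u(t)-\Phi(x))f(t,x)\,dx$; the shape of this right-hand side suggests testing \eqref{eq:moment_eq_general} against the primitive $\varphi(x)=\int_0^x\Phi(z)\,dz$ (which vanishes at $x=0$, killing the boundary term) and adding $\tfrac12u^2$. This produces
\[
\mathcal E(t):=\frac{u(t)^2}{2}+\int_0^\infty\Big(\int_0^x\Phi(z)\,dz\Big)f(t,x)\,dx,
\]
finite at $t=0$ by \eqref{H:init}, nonnegative, and satisfying
\[
\frac{d}{dt}\mathcal E(t)=-\int_0^\infty a(x)\big(u(t)-\Phi(x)\big)^2f(t,x)\,dx=:-D(t)\le0.
\]
Thus $\mathcal E$ is nonincreasing and bounded below, so the dissipation is integrable, $\int_0^\infty D(t)\,dt<\infty$. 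The proof is then driven by this bound together with the monotone quantity $M_0$ (with $M_0'=\mathfrak n(u)\ge0$) and suitable tail masses.

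Second, I would prove $u(t)\to\Phi_0$, starting with $\liminf_{t\to\infty}u(t)=\Phi_0$ by contradiction. Assume $u(t)\ge\ell'>\Phi_0$ for all large $t$ and set $x_2:=\Phi^{-1}\big(\tfrac{\ell'+\Phi_0}{2}\big)$, which is legitimate by \eqref{H1}. On $(0,x_2]$ the velocity $a(x)(u-\Phi(x))\ge a(x)(\ell'-\Phi(x))>0$ is bounded below, and by \eqref{H:a} the transit time from $0$ to $x_2/2$ is finite, $\le\bar\tau<\infty$; since the velocity stays positive there, characteristics never re-enter $(0,x_2/2)$. Consequently every aggregate present in $(0,x_2/2)$ at time $t$ was nucleated within the previous $\bar\tau$ units of time, so $N(t):=\int_0^{x_2/2}f(t,\cdot)\,dx\le\bar\tau\max_{[0,\rho]}\mathfrak n<\infty$ is bounded. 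At the same time the mass $m_2(t):=\int_{x_2/2}^\infty xf\,dx$ is nondecreasing and $\le\rho$, because the only flux across $x_2/2$ is outgoing while mass beyond the root $x^\ast(t)=\Phi^{-1}(u(t))>x_2$ drifts inward; hence the outgoing number flux $F$ at $x_2/2$ is integrable. The number balance $N'=\mathfrak n(u)-F$ then gives $N(t)=N(0)+\int_0^t\mathfrak n(u)\,ds-\int_0^tF\,ds\to\infty$, since $\int_0^t\mathfrak n(u)\,ds\ge c\,\ell'^{\,k_0}t\to\infty$ by \eqref{H:nucleation}, contradicting the boundedness of $N$. Upgrading $\liminf u=\Phi_0$ to the full limit is the delicate step I discuss below.

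Third, granting $u\to\Phi_0$, I would deduce the concentration and $M_0\to+\infty$ from the dissipation. Fix $\delta>0$. By \eqref{H1}, $\Phi(x)\ge\Phi(\delta)>\Phi_0$ on $[\delta,\infty)$, and $a\ge a_\delta>0$ there by \eqref{H:a} and continuity; since $u\to\Phi_0$, for large $t$ one has $(u-\Phi(x))^2\ge\tfrac14(\Phi(\delta)-\Phi_0)^2$ on $[\delta,\infty)$, so $D(t)\ge\kappa_\delta\int_\delta^\infty f\,dx$ with $\kappa_\delta>0$. Together with a time-equicontinuity of $t\mapsto\int_\delta^\infty f\,dx$ furnished by \eqref{H:ab_deriv}, the bound $\int_0^\infty D\,dt<\infty$ upgrades to $\int_\delta^\infty f(t,\cdot)\,dx\to0$, whence $\int_\delta^R xf\,dx\le R\int_\delta^\infty f\,dx\to0$; adding a uniform tightness estimate (a superlinear-moment bound obtained from \eqref{eq:bphi}, so that no mass escapes to $x=+\infty$) yields $\int_\delta^\infty xf\,dx\to0$ for every $\delta>0$. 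Since $\int_0^\infty xf\,dx=\rho-u\to\rho-\Phi_0$, testing against $\varphi\in\Cc^0([0,\infty))$ bounded and splitting at $\delta$ (small near $0$ because $\varphi(x)-\varphi(0)\to0$, small away from $0$ by the above) gives $xf(t,\cdot)\,dx\rightharpoonup(\rho-\Phi_0)\delta_0$ in $\Mc^+_\rho([0,\infty))$. Finally $M_0\to+\infty$: for fixed small $\delta$, eventually $\int_0^\delta xf\,dx\ge\tfrac12(\rho-\Phi_0)$, hence $M_0\ge\int_0^\delta f\,dx\ge\delta^{-1}\int_0^\delta xf\,dx\ge\tfrac{\rho-\Phi_0}{2\delta}$, and letting $\delta\to0$ forces $M_0(t)\to+\infty$ (here $\rho-\Phi_0>0$, since any admissible $u$ lives in $(\Phi_0,\rho]$).

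The main obstacle is the upgrade from $\liminf u=\Phi_0$ to $\limsup u=\Phi_0$, i.e. the full convergence of $u$. The dissipation $D$ degenerates precisely where the mass wants to sit---near the moving equilibrium $x^\ast(t)=\Phi^{-1}(u(t))$ and near $x=0$, where $a$ may vanish---so $\int_0^\infty D\,dt<\infty$ alone cannot rule out transient re-increases of $u$; moreover $\int_0^\infty a f\,dx$ is not bounded (as $M_0\to+\infty$), so $u$ has no cheap a priori modulus of continuity. I expect to close this gap by showing that a rebound of $u$ is self-defeating: by $u'=-\int a(u-\Phi)f$, an increase of $u$ requires a definite amount of mass at sizes $x>x^\ast$, which the transport mechanism of the second step together with $\int_0^\infty D\,dt<\infty$ forbids; equivalently, by proving that the aggregate part of $\mathcal E$ converges, so that the monotone limit of $\mathcal E$ pins down a single limit for $u$, necessarily $\Phi_0$ by the first part. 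A secondary technical point is the passage from the time-integrability of $\int_\delta^\infty f\,dx$ to a genuine limit, which I handle through the equicontinuity afforded by \eqref{H:ab_deriv}.
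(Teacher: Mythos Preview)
Your Lyapunov functional $\mathcal E$ is exactly the paper's $H_k$ with $k(x)=\int_0^x\Phi$ (Remark~\ref{rm:choices}), so the dissipation bound is the same starting point. The decisive structural difference is the \emph{order} in which the three conclusions are obtained. The paper establishes $M_0\to+\infty$ \emph{first} (Proposition~\ref{prop:m0}): for $\Phi_0>0$ this is immediate from $M_0'=\mathfrak n(u)\ge c\,\Phi_0^{k_0}>0$; for $\Phi_0=0$ one argues by contradiction, showing that boundedness of $M_0$ forces $u^{k_0}\in L^1_t$ and (via $u'\in L^2_t$, obtained from Cauchy--Schwarz on $u'=-\int a(u-\Phi)f$ and the dissipation bound) $u^p\in W^{1,1}$ for large $p$, hence $u\to0$; then the dissipation together with \eqref{eq:bphi} drives $\int xf(t_n)\to0$ along a subsequence, contradicting mass conservation. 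With $M_0\to\infty$ secured, a LaSalle-type compactness argument on the measures $xf\,dx$ shows that every subsequential vague limit is either $0$ or a Dirac at $\Phi^{-1}(u(t))$; the Dirac alternative is excluded \emph{precisely} because $M_0\to\infty$. From vague concentration one gets $\int x^2f\to0$ via de la Vall\'ee Poussin, and a second Lyapunov $H=\tfrac12\int x^2f+\Psi(u)$ then forces $u$ to converge; the limit is pinned to $\Phi_0$ by testing with $A(x)=\int_0^x a^{-1}$ and invoking $M_0\to\infty$ once more.

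Your route inverts this logic, and the acknowledged gap---upgrading $\liminf u=\Phi_0$ to the full limit---is genuine and not closed by either of your proposed fixes. Fix~(b) asks that the aggregate part $G(t)=\int(\int_0^x\Phi)f$ converge, but $G'=\int a\Phi(u-\Phi)f$ has no sign, and the only evident way to force $G$ to a limit is through concentration (say $\int x^2f\to0$), which in your scheme is deduced \emph{after} $u\to\Phi_0$: this is circular. Fix~(a) would need a quantitative lower bound on the dissipation during an excursion $u\ge\Phi_0+\varepsilon$, but $D$ degenerates near $x=\Phi^{-1}(u)$, exactly where the rebound mechanism wants to place mass, and integrability of $D$ alone cannot rule out infinitely many such excursions of shrinking cost. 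The paper avoids the whole difficulty by proving $M_0\to\infty$ up front; this single fact then does double duty, killing the Dirac alternative in the compactness step and driving the identification of $\lim u$.

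Two smaller points. In your $\liminf$ argument, the quantity that is nondecreasing and bounded (hence renders the number flux $F$ at $x_2/2$ integrable) is the \emph{number} $\int_{x_2/2}^\infty f\le 2\rho/x_2$, not the mass $m_2$; the latter can decrease since particles beyond $x^\ast$ shed mass to the monomer pool. And in Step~3 the equicontinuity should be obtained with a smooth cutoff supported in $[\delta/2,\infty)$, so that the time derivative is bounded by $C\int_{\delta/2}^\delta f\le 2C\rho/\delta$; with the sharp indicator you would be left with an uncontrolled pointwise flux at $x=\delta$.
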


The proof relies on a Lyapunov functional, which we introduce in Section \ref{sec:lyapou}. Right
after that we proceed with the proof of Theorem \ref{thm:largetime} in Section \ref{sec:dust}.

Note that Theorem \ref{thm:largetime} is a generalisation of an earlier result proved in
\cite{Calvo18} for $a(x)=1$ and $b(x)$ such that $0<c_1\le b'(x)\le c_2$ for some constants $c_1$
and $c_2$.  In that particular case, the rates can be computed explicitly. Those are algebraic and
depend on the specific form of the nucleation rate, but are nevertheless quite slow. For more general coefficients, our method of proof cannot provide specific estimates and therefore we cannot ascertain the timescales over which the average aggregate size tends to zero. However, we performed numerical simulations,
whose results suggest that this algebraic trend is actually what we should expect generically. This
is discussed in Section \ref{sec:numeric}. Our results are complemented with an analysis of the case
of constant $\Phi$, which requires a separate treatment. Its long time behavior is analyzed in
Section \ref{sec:const} and expands on the results given in \cite{Collet04}.

One of the main points underlying the previous results is to discriminate whether the system will be
able to fuel nucleation reactions to the extent that the number of fragments grows without
control. We actually show that this is the case for a representative number of situations. Since the
total mass is preserved, this suggests that the average aggregate size becomes smaller and smaller,
which is an instance of {\em dust formation}.

\section{Lyapunov functional}
\label{sec:lyapou}

We shall introduce a Lyapunov functional in the vein of \cite{Collet02a,Calvo18}. For $k$ a
continuous and positive function on $[0,+\infty)$ with continuous derivative and $f$ a solution of \eqref{eq:LS-N}--\eqref{eq:LS-Nini}, we
define for all $t\geq 0$
\begin{equation}\label{eq:H_k}
    H_k(t)=\int_0^\infty k(x)f(t,x)dx + K(u(t))\,,
\end{equation}
with $K(v) = \int_0^v k'\circ \Phi^{-1}(z)dz$. This makes sense since $\Phi$ is monotonous. The
functional $H_k$ is a Lyapunov functional and its time derivative $D_k$ is called its dissipation,
as the following result makes clear.
  
\begin{prop}
\label{prop:lyap}
Assume $0\leq k\in \Cc^{1}([0,+\infty))$ is convex with $k(0)=0$ and $f$ is a solution in the sense of Definition \ref{def:sol}. If
$H_k(0) < \infty$, then $t\mapsto H(t)$ is non-increasing, non-negative and for all $t\geq 0$
\begin{equation} \label{eq:dotH_k}
    H_k(t) + \int_s^t D_k(s) ds  \leq H_k(s)\,,\qquad \forall \, 0\leq s<t  
\end{equation}
where
\[ 0 \leq D_k(t) = \int_0^\infty \left(K'(u(t)) -K'(\Phi(x)) )\right)\left(u(t)-\Phi(x)\right)a(x)f(t,x)dx 
\]
belongs to $L^1_{t}(0,\infty)$. 
\end{prop}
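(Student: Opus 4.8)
The plan is to establish the single identity $\frac{d}{dt}H_k(t) = -D_k(t)$ and then read off every claim from it. First I would differentiate the aggregate term by inserting $\vphi=k$ into the moment equation \eqref{eq:moment_eq_general}; since $k(0)=0$ the boundary contribution $\vphi(0)\mathfrak n(u)$ drops, and writing $a(x)u-b(x)=a(x)(u-\Phi(x))$ gives
\[\frac{d}{dt}\int_0^\infty k(x)f(t,x)\,dx = \int_0^\infty a(x)\big(u(t)-\Phi(x)\big)k'(x)f(t,x)\,dx.\]
For the monomer term I would use the conservation law \eqref{eq:LS-con} together with the choice $\vphi(x)=x$ in \eqref{eq:moment_eq_general} (admissible since $\vphi'\equiv 1\in L^\infty$), obtaining
\[u'(t) = -\int_0^\infty a(x)\big(u(t)-\Phi(x)\big)f(t,x)\,dx,\]
and hence $\frac{d}{dt}K(u(t))=K'(u(t))u'(t)$. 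The decisive algebraic step is the identity $K'(\Phi(x))=k'(x)$, immediate from $K'(v)=k'(\Phi^{-1}(v))$ and the fact that $\Phi^{-1}$ is a genuine inverse under \eqref{H1}. Substituting $k'(x)=K'(\Phi(x))$ in the first display and adding the monomer term yields
\[\frac{d}{dt}H_k(t) = -\int_0^\infty \big(K'(u(t))-K'(\Phi(x))\big)\big(u(t)-\Phi(x)\big)a(x)f(t,x)\,dx = -D_k(t),\]
which is exactly the claimed dissipation.

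The remaining properties then follow cheaply. Convexity of $k$ makes $k'$ non-decreasing; since $\Phi$ is strictly increasing, $\Phi^{-1}$ is increasing and so $K'=k'\circ\Phi^{-1}$ is non-decreasing, whence $\big(K'(u)-K'(\Phi(x))\big)\big(u-\Phi(x)\big)\ge 0$ pointwise. With $a\ge 0$ and $f\ge 0$ this gives $D_k\ge 0$, and therefore $H_k$ is non-increasing. For nonnegativity of $H_k$ I would note that $k(0)=0$ together with $k\ge 0$ and convexity forces $k'(0)\ge 0$, hence $k'\ge 0$ on $[0,\infty)$, so $K'\ge 0$ and $K\ge 0$ on $[0,\infty)$; since $u(t)>\Phi_0\ge 0$ both terms of $H_k$ are nonnegative. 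Finally, integrating the identity over $[s,t]$ gives $H_k(t)+\int_s^t D_k(\sigma)\,d\sigma = H_k(s)\le H_k(0)<\infty$, and letting $t\to\infty$ with $D_k\ge 0$, $H_k\ge 0$ yields $D_k\in L^1(0,\infty)$.

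The genuine obstacle is that the computation above is purely formal: a convex $k$ with $k(0)=0$ generically has $k'$ unbounded, so $\vphi=k$ is \emph{not} an admissible test function in \eqref{eq:LS-moment-equation}, whose validity is asserted only for $\vphi'\in L^\infty$. The hard part will therefore be a truncation argument. I would set $k_n(x)=\int_0^x \min(k'(s),n)\,ds$, so that each $k_n\in\Cc^1$ is convex with $k_n(0)=0$, $k_n'\in L^\infty$, $k_n\le k$ and $k_n\uparrow k$. For each $n$ the test function $k_n$ is admissible, so the integrated identity
\[H_{k_n}(t) + \int_s^t D_{k_n}(\sigma)\,d\sigma = H_{k_n}(s)\]
holds exactly, the flux integrals being finite thanks to $a(x)|u-\Phi(x)|\le a(x)u+b(x)\le C(u+1)(1+x)$ from \eqref{eq:sublinear-rate} and $f\in L^1(\Rb_+,(1+x)dx)$.

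It remains to pass to the limit $n\to\infty$. Because $k_n\uparrow k$ and $f\ge 0$, monotone convergence gives $\int_0^\infty k_n f(t,\cdot)\to\int_0^\infty k f(t,\cdot)$, and likewise $K_n(u(t))\uparrow K(u(t))$; using $k_n\le k$ and $K_n\ge0$ one has $H_{k_n}(t)\le H_{k_n}(0)\le H_k(0)<\infty$, which propagates finiteness of the moment and shows $H_{k_n}(s)\to H_k(s)$, $H_{k_n}(t)\to H_k(t)$. The subtle point is the dissipation: the integrands $D_{k_n}$ are nonnegative and converge pointwise to $D_k$, but only Fatou's lemma is available, giving $\liminf_n\int_s^t D_{k_n}\ge\int_s^t D_k$. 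Combining with the exact identities then produces $H_k(t)+\int_s^t D_k(\sigma)\,d\sigma\le H_k(s)$, which is precisely why \eqref{eq:dotH_k} is stated as an inequality rather than an equality; the control of $H_k(0)<\infty$ through \eqref{H:init} closes the argument.
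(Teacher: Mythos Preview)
Your proposal is correct and follows essentially the same route as the paper's proof: truncate $k$ so that the derivative becomes bounded, use the truncated function as an admissible test function in \eqref{eq:LS-moment-equation}, and then pass to the limit via Fatou's lemma on the (nonnegative) dissipation term to obtain the inequality \eqref{eq:dotH_k}. Your truncation $k_n(x)=\int_0^x\min(k'(s),n)\,ds$ is the same as the paper's $k_R$ (linearize $k$ beyond the point where $k'$ reaches a fixed level), and you spell out in more detail than the paper why only an inequality survives in the limit and why $H_k\ge 0$ and $D_k\in L^1_t$.
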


\begin{proof}
  Let $R>0$, $k_R(x) = k(x)$ for $x<R$ and $k_R(x) = k'(R)(x-R)+k(R)$ for $x\geq R$. Notice that
  $k_R$ can be used as a test function in \eqref{eq:LS-moment-equation}. Moreover, $k_R$ is
  convex. We construct $H_{k_R}$ via formula \eqref{eq:H_k} and we compute the dissipative part,
  which is nonnegative because $K'=k_R'\circ\Phi^{-1}$ is increasing. Then we conclude by Fatou's
  lemma.
\end{proof}

As a straightforward consequence with $k(x)=\tfrac 1 2 x^2$ we get the following result:
\begin{corollary}
\label{cor:lyap} Assume \eqref{H:init} and let $f$ be a solution in the sense of Definition \ref{def:sol}. We define
  \begin{equation}
    \label{eq:H_2}
    H(t)= \frac 1 2 \int_0^\infty x^2 f(t,x)dx + \Psi(u(t))\,,
  \end{equation}
  with $\Psi(v) = \int_0^v \Phi^{-1}(z)dz$. We have that $H(0)<\infty$ and $t\mapsto H(t)$ is
  non-increasing, non-negative and the dissipation part is given, for all $t\geq 0$, by
  \[ 
  D(t) =  \int_0^\infty \left(\Phi^{-1}(u(t))- \Phi^{-1}(\Phi(x))\right) \left(u(t)-\Phi(x)\right) a(x)f(t,x)dx\,. 
  \]
\end{corollary}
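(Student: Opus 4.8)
The plan is to derive Corollary~\ref{cor:lyap} directly from Proposition~\ref{prop:lyap} by specializing the kernel to $k(x)=\tfrac12 x^2$ and verifying that the abstract objects $H_k$, $K$ and $D_k$ reduce to the stated expressions. First I would check that this $k$ meets the hypotheses of the proposition: it is nonnegative, $\Cc^1$ (indeed smooth), convex since $k''(x)=1>0$, and $k(0)=0$. So the proposition applies verbatim, and all that remains is to identify the pieces.

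\smallskip

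Next I would compute the auxiliary function $K$. By definition $K(v)=\int_0^v k'\circ\Phi^{-1}(z)\,dz$, and since $k'(x)=x$ we have $k'\circ\Phi^{-1}(z)=\Phi^{-1}(z)$, so $K(v)=\int_0^v \Phi^{-1}(z)\,dz=\Psi(v)$. Substituting into \eqref{eq:H_k} then yields exactly $H(t)=\tfrac12\int_0^\infty x^2 f(t,x)\,dx+\Psi(u(t))$, matching \eqref{eq:H_2}. For the dissipation I would note that $K'(v)=\Phi^{-1}(v)$, so the integrand factor $K'(u(t))-K'(\Phi(x))$ in $D_k$ becomes $\Phi^{-1}(u(t))-\Phi^{-1}(\Phi(x))$, giving precisely the claimed formula for $D(t)$. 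The monotonicity, nonnegativity and the energy inequality \eqref{eq:dotH_k} are then inherited directly from the proposition with no extra work.

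\smallskip

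The only genuinely new point to verify is the finiteness $H(0)<\infty$, which is the hypothesis $H_k(0)<\infty$ required to invoke the proposition. Here I would use \eqref{H:init}. The first term $\tfrac12\int_0^\infty x^2 f^{\rm in}(x)\,dx$ is controlled by the $x^2$ contribution in \eqref{H:init}. For the second term $\Psi(u(0))=\int_0^{u(0)}\Phi^{-1}(z)\,dz$, I would observe that $u(0)=\rho-\int_0^\infty x f^{\rm in}\,dx$ is a finite number (finiteness of the first moment follows from $f^{\rm in}\in L^1(\Rb_+,(1+x)dx)$), and that $\Psi$ is finite at any finite argument because $\Phi^{-1}$ is continuous and monotone, hence locally integrable. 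This is where the link between \eqref{H:init} and $H(0)$ is cleanest to establish by a change of variables: writing $\Psi(u(0))$ via the layer-cake/Fubini identity relates it to $\int_0^\infty \Phi(z)\,\indic{z<\cdot}\,dz$-type quantities, and the hypothesis $\int_0^\infty\big(\int_0^x\Phi(z)\,dz\big)f^{\rm in}(x)\,dx<\infty$ in \eqref{H:init} is tailored exactly so that this term is finite. I expect this finiteness check to be the main (and only) obstacle, and it is a minor one; everything else is a direct specialization.
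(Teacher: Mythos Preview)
Your proposal is correct and matches the paper's approach exactly: the corollary is stated there as a ``straightforward consequence'' of Proposition~\ref{prop:lyap} with $k(x)=\tfrac12 x^2$, and you have simply spelled out the specialization. One small remark: your layer-cake/Fubini digression linking $\Psi(u(0))$ to the $\int_0^x\Phi(z)\,dz$ part of \eqref{H:init} is misguided and unnecessary --- the finiteness of $\Psi(u(0))$ follows, as you already said, from local integrability of $\Phi^{-1}$ on $[0,u(0)]$, while the $\int_0^x\Phi$ hypothesis in \eqref{H:init} is there to feed the \emph{other} kernel $k(x)=\int_0^x\Phi$ used in Remark~\ref{rm:choices}.
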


\begin{remark}
\label{rm:choices}
There are other useful choices of the function $k$.  We will make use of $k(x)=\int_0^x \Phi(y)dy$ which
leads to $k'\circ \Phi^{-1}=Id$ and then
\begin{equation}
\frac{d}{dt}H_k \leq  -\int_0^\infty (u(t)-\Phi(x))^2a(x)f(t,x)dx \in L^1_t(0,\infty).
\end{equation}
We may also take $k(x)=x^{\eta}$ with $\eta\geq 1$ to control moments of the form
\[M_{\eta}(t) := \int_{0}^{\infty} x^{\eta}f(t,x) dx\,.\] Indeed, for any moment
greater than $1$ which is initially finite, we have a uniform bound for every positive time.
\end{remark}

\section{Proof of the main result}
\label{sec:dust}

\subsection{The number of fragments diverges}

We prove first a generic result showing that a shattering phenomenon takes place on long time
intervals: the number of fragments diverges.

\begin{prop}\label{prop:m0}
Assume \textup{(\ref{H0}, \ref{H:ab_deriv},\ref{eq:bphi}-\ref{H:init})}. Then, for every solution in the sense of Definition \ref{def:sol}, there holds that $\lim_{t\to+\infty} M_0(t) = +\infty.$

\end{prop}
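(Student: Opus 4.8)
The plan is to argue by contradiction. Since $M_0'(t)=\mathfrak{n}(u(t))\ge 0$, the number of fragments is nondecreasing, so it suffices to rule out that it stays bounded. If it did, then $\int_0^\infty \mathfrak{n}(u)\,dt<\infty$, and \eqref{H:nucleation} would give $\int_0^\infty u^{k_0}\,dt<\infty$. When $\Phi_0>0$ this is already absurd, because $u(t)>\Phi_0$ for every $t$ by Definition \ref{def:sol}; hence I may assume $\Phi_0=0$. The two ingredients I would combine are: (i) the dissipation estimate of Remark \ref{rm:choices} for the weight $k(x)=\int_0^x\Phi$, namely $\int_0^\infty\!\int_0^\infty (u-\Phi)^2 a f\,dx\,dt<\infty$, whose finiteness comes from $H_k(0)<\infty$ thanks to \eqref{H:init}; and (ii) the uniform moment bound $M^{*}:=\sup_{t\ge 0}M_2(t)<\infty$, also furnished by Remark \ref{rm:choices} since $M_2(0)<\infty$. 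Justifying the dissipation identity for the non-smooth weight $\int_0^x\Phi$ requires the truncation/Fatou argument of Proposition \ref{prop:lyap}.

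Because both $u^{k_0}$ and the dissipation density $D(t)=\int_0^\infty(u-\Phi)^2 a f\,dx$ are nonnegative and time-integrable, their $\liminf$ as $t\to\infty$ vanishes, so I can extract $t_n\to\infty$ with $u_n:=u(t_n)\to 0$ and $D(t_n)\to 0$. Write $f_n:=f(t_n,\cdot)$. On the set $A_n:=\{x:\Phi(x)\ge 2u_n\}$ one has $(u_n-\Phi)^2\ge\tfrac14\Phi^2$, so \eqref{eq:bphi} yields $a(u_n-\Phi)^2\ge\tfrac14 a\Phi^2=\tfrac14 b\Phi\ge\tfrac1{4C}\min(1,x^2)$; integrating over $A_n$ gives
\[
\int_{A_n}\min(1,x^2)\,f_n\,dx\le 4C\,D(t_n)\longrightarrow 0 .
\]
Restricting to $x\ge 1$, where $\min(1,x^2)=1$, this says the number of large fragments in $A_n$ vanishes. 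Moreover \eqref{eq:bphi} together with \eqref{eq:sublinear-rate} forces $\Phi(x)\ge \tfrac{1}{C^2(1+x)}$ for $x\ge 1$, so by \eqref{H1} the whole half-line $\{x\ge 1\}$ lies in $A_n$ as soon as $2u_n<\Phi(1)$; hence $\int_{\{x\ge 1\}}f_n\,dx\to 0$.

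To upgrade this to vanishing tail \emph{mass} I would use $M^{*}$: for any $R\ge 1$, $\int_{\{x\ge1\}}x f_n\le R\int_{\{x\ge1\}}f_n+\tfrac1R\int_{\{x>R\}}x^2 f_n\le R\int_{\{x\ge1\}}f_n+M^{*}/R$, and letting $n\to\infty$ then $R\to\infty$ gives $\int_{\{x\ge1\}}x f_n\to 0$. Mass conservation $\int_0^\infty x f_n=\rho-u_n\to\rho$ then forces $\int_{(0,1)}x f_n\to\rho$. On $(0,1)$ I split at the threshold: the part in $A_n$ obeys $\int_{A_n\cap(0,1)}x^2 f_n\le\int_{A_n}\min(1,x^2)f_n\to 0$, whereas on the complement \eqref{H1} makes $\{x\in(0,1):\Phi(x)<2u_n\}$ an interval $(0,x_n)$ with $x_n=\Phi^{-1}(2u_n)\to 0$ (this is where $\Phi_0=0$ enters), so $\int_0^{x_n}x^2 f_n\le x_n\int_0^{x_n}x f_n\le \rho\,x_n\to 0$. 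Therefore $\int_{(0,1)}x^2 f_n\to 0$.

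The conclusion then follows from Cauchy--Schwarz on $(0,1)$:
\[
M_0(t_n)\ge \int_{(0,1)}f_n\ge \frac{\big(\int_{(0,1)}x f_n\big)^2}{\int_{(0,1)}x^2 f_n},
\]
whose numerator tends to $\rho^2>0$ while the denominator tends to $0$, so $M_0(t_n)\to+\infty$, contradicting the boundedness of $M_0$. I expect the main obstacle to be controlling the two ways mass can hide: escape towards $x=+\infty$, which is neutralised by the uniform second moment $M^{*}$, and concentration near the origin with an $O(1)$ contribution to $\int x^2 f_n$, which is neutralised by the monotonicity \eqref{H1} confining the sub-threshold region to a shrinking interval at $0$. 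Everything else (the extraction of $t_n$, the regional inequality, and the elementary truncation of the tail) is routine once these two points are secured.
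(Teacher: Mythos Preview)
Your proof is correct and rests on the same ingredients as the paper's (bounded $M_0$ forces $u^{k_0}\in L^1_t$; the dissipation for $k=\int_0^x\Phi$ is in $L^1_t$; the second moment is uniformly bounded; and \eqref{eq:bphi} links $b\Phi$ to $\min(1,x^2)$), but the execution is genuinely different in one respect. The paper first shows that $u(t)\to 0$ as a \emph{full} limit: from the Cauchy--Schwarz bound $|u'|\le(\int af)^{1/2}D_k^{1/2}$ and the assumed boundedness of $M_0$ one obtains $u'\in L^2(0,\infty)$, and then $u^p\in W^{1,1}(0,\infty)$ for $p>\max(2,k_0)$ forces $u(t)\to 0$. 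Only afterwards is a sequence $t_n$ extracted with $D_k(t_n)\to 0$; expanding $D_k=u^2\!\int af+\int b\Phi f-2u\!\int bf$ and using the full convergence $u\to 0$ isolates $\int b\Phi f(t_n,\cdot)\to 0$, and a single Cauchy--Schwarz with \eqref{eq:bphi} gives $\int xf(t_n,\cdot)\to 0$, contradicting mass conservation. You bypass the $W^{1,1}$ step entirely by extracting one sequence along which $u^{k_0}+D_k\to 0$ simultaneously (this is what $u^{k_0}+D_k\in L^1_t$, $\ge 0$, actually buys you; the phrasing ``their $\liminf$ vanishes'' should be read as the $\liminf$ of the sum). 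This is a real shortcut. Your endgame---the three-region split and the Cauchy--Schwarz lower bound on $\int_{(0,1)}f_n$---is more hands-on than the paper's single inequality $\int xf\le(\int b\Phi f)^{1/2}\bigl(C\!\int(1+x^2)f\bigr)^{1/2}$, but both routes are valid.

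One side remark: you invoke \eqref{H1} explicitly (to place $\{x\ge 1\}$ inside $A_n$ and to identify the sub-threshold set as a shrinking interval), although the proposition as stated does not list \eqref{H1}. This is not a defect peculiar to your argument: the paper's own proof also relies tacitly on the monotonicity of $\Phi$, since Proposition~\ref{prop:lyap} requires $k=\int_0^x\Phi$ to be convex and Corollary~\ref{cor:lyap} uses $\Phi^{-1}$. The omission thus appears to be in the hypothesis list rather than in either proof.
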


\begin{proof}
Taking $\mathbf 1$ as a test function in \eqref{eq:LS-moment-equation} we have
\[M_0(t) = M_0(0) + \int_0^t \mathfrak n (u(s)) ds\] thus $M_0$ is monotonically increasing. The
result follows directly in the case $\Phi_{0}>0$ by \eqref{H:nucleation}. Therefore, we provide a
proof for $\Phi_{0}= 0$. We argue by contradiction. Suppose $M_0$ is bounded above independently of
time. By \eqref{eq:LS-moment-equation} with $\vphi(x)=x$ and noticing \eqref{eq:sublinear-rate} and
$0\leq u(t) \leq \rho$, we deduce
\[u'(t) = - u(t) \int_{0}^{\infty}a(x)f(t,x) + \int_{0}^{\infty} b(x) f(t,x) dx \in L^{\infty}_{t}(0,\infty)\,.\]
This entails $u\in W^{1,\infty}(0,\infty)$. Moreover, using Cauchy-Schwarz's inequality,
\[|u'(t)| \leq \left(\int_0^\infty a(x)f(t,x) dx\right)^{\tfrac 1 2} \left( \int_0^\infty
    a(x)(u(t)-\Phi(x))^2 f(t,x)dx\right)^{\tfrac 1 2} \,.\]
By \eqref{H:init} and Proposition
\ref{prop:lyap} with $k(x) = \int_{0}^{x}\Phi(y)dy$ -see also Remark \ref{rm:choices}, we have
\begin{equation}
\label{eq:dissip}
D_{k}(t) = \int_0^\infty a(x)(u(t)-\Phi(x))^2 f(t,x)dx \in L^1_{t}(0,\infty)\,.
\end{equation}
Thus, with \eqref{eq:sublinear-rate}, we deduce that $u' \in L^2(0,\infty)$. 

Next, we notice that $u^{k_0} \in L^1(0,\infty)$; this is due to \eqref{H:nucleation} and
\[ c\int_{0}^{\infty} u^{k_{0}}(t) dt \leq \int_{0}^{\infty} \mathfrak{n}(u(t))dt \leq  \limsup_{t\to\infty} M_{0}(t)-M_{0}(0),\]
together with the supposed bound on the $0^{th}$-order moment. Let
$p>\max(2,k_0)$, we have
\[ \int_0^\infty u^p(t) dt = \int_0^\infty u^{p-{k_0}}(t) u^{k_0}(t) \leq \rho^{p-k_0} \|u^{k_0}\|_{L^1(0,\infty)}\:,\]
thus $u^p \in L^1(0,\infty)$. Moreover $(u^p)' = p u' u^{p-1}$ belongs to $L^1(0,\infty)$ because
\[ \int_0^\infty |u' u^{p-1}| dt \leq \left(\int_0^\infty |u'|^2 \right)^{\tfrac 1 2} \int_0^\infty
  u^{2(p-1)} dt \leq \|u'\|_{L^2} \int_0^\infty u^{2(p-1)-k_0+k_0} dt\:. \]
But $2p -2 - k_0>0$, thus
$u^{2(p-1)-k_{0}}\leq \rho^{2(p-1)-k_{0}}$ and therefore $(u^p)'$ belongs to $L^1(0,\infty)$ because
$u^{k_{0}}$ does. This implies that $u^{p}\in W^{1,1}(0,\infty)$ and hence $u(t)^{p}\to 0$ as
$t\to +\infty$.

We now turn to the dissipation part \eqref{eq:dissip}. There exists a sequence of times
$t_n \to +\infty$ such that $D_{k}(t_n) \to 0$ by integrability. Actually the
dissipation reads
\[
D_{k}(t)= u^2(t)\int_0^\infty a(x)f(t,x)\, dx
+\int_0^\infty b(x) \Phi(x)f(t,x)\, dx-2u(t) \int_0^\infty b(x) f(t,x)\, dx\:.
\]
Using \eqref{eq:sublinear-rate} and the definition of solution, the first and last integrals are
continuous and bounded in time. Together with the fact that $u(t)\to 0$ as $t\to \infty$ we have
that
\[ 
\lim_{n\to +\infty} \int_0^\infty b(x) \Phi(x) f(t_n,x)dx = 0.
\]
Thanks to \eqref{eq:bphi} and Cauchy--Swartz's inequality we get
\[ \int_0^\infty x f(t,x) dx \leq \left(\int_0^\infty b(x)\Phi(x)f(t,x)dx\right)^{\tfrac 1 2} \left(
    \int_0^\infty C (1+x^2) f(t,x)dx\right)^{\tfrac 1 2}\:. \]
Since the $0^{th}$-order moment is
bounded and the second order moment as well by Corollary \ref{cor:lyap}, we obtain
\[\lim_{n\to+\infty} \int_0^\infty xf(t_n,x)dx = 0.\]
This contradicts the fact that $u(t_n) = \rho - \int_{0}^{\infty}xf(t_{n},x)dx\to 0$.
\end{proof}

Since mass is preserved, the divergence of $M_0$ implies that the average aggregate size tends to
zero. 

\subsection{Concentration behavior for the mass density}

We can use the dissipation to extract some information on the long-time asymptotic. {We proceed
  by  standart Lasalle's invariance principle arguments, proving that the orbits are relatively compact and
  we identify trajectories in the $\omega$-limit set to be time independent. In fact we shall work out the argument from scratch because we lack the continuity of our Lyapunov functional.}

In this section we assume our Hypotheses (\ref{H0}-\ref{H:nucleation}) hold true and we take $f$ a
solution in the sense of Definition \ref{def:sol} with initial data $f^{\rm in}$ satisfying \eqref{H:init}. We let $\{t_n\}$ an arbitrary
increasing sequence of times with $\lim_{n\to+\infty} t_n = +\infty$. Let $T>0$ arbitrary, we define for $t\in [0,T]$
\[ f^n(t,x) = f(t+t_n,x) \text{ and } \mu_t^n(dx) = x f^n(t,x) dx \,.\] The measures $\mu^n_t$ are
bounded nonnegative Radon measures on $(0,+\infty)$ with $\mu_t^n((0,+\infty)) = \rho$. Let
$H$ be given by \eqref{eq:H_2} and $H^n(t) = H(t+t_n)$. 

Let $\Mc^+_\rho(0,+\infty)$ the set of nonnegative Radon measures on $(0,+\infty)$ with mass less or
equal to $\rho$. The topology induced by the dual of $\Cc_c^0(0,\infty)$ on $\Mc^+_\rho(0,\infty)$
is called vague topology. We denote this space with such topology by $v-\Mc^+_\rho(0,\infty)$. This
space is metrizable and compact \cite{Brezis}, with metric
\[ d(\mu,\nu) = \sum_{k\geq 0} 2^{-k} \min ( 1 , |\langle \mu , \vphi_{k}\rangle-\langle \nu ,
  \vphi_{k} \rangle|) \]
for all $\mu$, $\nu$ in $\Mc^{+}_{\rho}(0,\infty)$ where
$(\vphi_{k})_{k\geq 0} \in \Cc_{c}^{\infty}(0,\infty)$ is dense in $\Cc_{c}^{0}(0,\infty)$ and $\langle \mu, \vphi\rangle=\int \vphi\, d\mu$ denotes
the duality pairing.

\begin{lemma}
  $\{\mu^n\}$ is relatively sequentially compact in $\Cc^0([0,T]), v-\Mc_{\rho}^+(0,\infty))$. 
\end{lemma}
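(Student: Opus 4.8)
The plan is to invoke the Arzelà--Ascoli theorem for maps from the compact interval $[0,T]$ into the compact metric space $v-\Mc^{+}_{\rho}(0,\infty)$. Since the target is compact, the pointwise precompactness hypothesis is automatic: for each fixed $t$, the family $\{\mu^{n}_{t}\}_{n}$ lies in $\Mc^{+}_{\rho}(0,\infty)$, which is vaguely compact by \cite{Brezis}. Thus the whole matter reduces to establishing \emph{equicontinuity} of the family $\{\mu^{n}\}_{n}$ with respect to the metric $d$, uniformly in $n$.

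To control the modulus of continuity in the metric $d$, I would first obtain a uniform Lipschitz estimate against each generating test function $\vphi_{k}$. Fix $\vphi \in \Cc_{c}^{\infty}(0,\infty)$ and set $G(\tau) = \int_{0}^{\infty} x\vphi(x) f(\tau,x)\, dx$, so that $\langle \mu^{n}_{t}, \vphi\rangle = G(t+t_{n})$. Since $x\mapsto x\vphi(x)$ vanishes at the origin, the boundary term in the moment equation \eqref{eq:moment_eq_general} drops out and
\[ G'(\tau) = \int_{0}^{\infty} (a(x)u(\tau) - b(x))(x\vphi(x))' f(\tau,x)\, dx. \]
The crucial point is that $\vphi$ is supported in some interval $[\delta, R] \subset (0,\infty)$ bounded away from zero. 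On this set the factor $(a(x)u(\tau) - b(x))(x\vphi(x))'$ is bounded by a constant depending only on $\delta, R, \rho$ and $\vphi$ (using $u(\tau)\le\rho$ and the sublinear bound \eqref{eq:sublinear-rate}), while $f(\tau,\cdot)$ is integrated over $[\delta, R]$ where $(1+x)\le (1+\delta^{-1})x$, so that
\[ \int_{\delta}^{R} (1+x) f(\tau,x)\, dx \le (1+\delta^{-1})\int_{0}^{\infty} x f(\tau,x)\, dx \le (1+\delta^{-1})\rho. \]
This yields $|G'(\tau)| \le C_{\vphi}$ uniformly in $\tau$, hence $|\langle \mu^{n}_{t},\vphi\rangle - \langle \mu^{n}_{s},\vphi\rangle| \le C_{\vphi}|t-s|$ for all $n$.

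With this in hand, equicontinuity follows by a standard tail-splitting of the series defining $d$: given $\veps>0$, choose $N$ with $\sum_{k>N} 2^{-k} < \veps/2$, use the Lipschitz bounds for the finitely many $\vphi_{0},\dots,\vphi_{N}$ with constant $C = \max_{k\le N} C_{\vphi_{k}}$, and pick $|t-s|$ small enough that $2C|t-s| < \veps/2$; then $d(\mu^{n}_{t},\mu^{n}_{s}) < \veps$ independently of $n$. Arzelà--Ascoli then gives relative sequential compactness in $\Cc^{0}([0,T], v-\Mc^{+}_{\rho}(0,\infty))$.

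The main obstacle to watch for is that the total number of aggregates $M_{0}(\tau)$ diverges by Proposition \ref{prop:m0}, so one cannot bound the time derivative of $G$ by the zeroth moment directly. The resolution is precisely the observation above: because every test function in $\Cc^{0}_{c}(0,\infty)$ is supported away from the origin, its mass against $f$ is governed by the conserved first moment rather than by $M_{0}$, which is what keeps the Lipschitz constants finite and uniform in $n$.
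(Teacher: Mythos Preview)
Your proof is correct and follows essentially the same route as the paper's: both apply Arzelà--Ascoli on $[0,T]$ into the compact metric space $v\text{-}\Mc^{+}_{\rho}(0,\infty)$, and both obtain equicontinuity by showing that $t\mapsto \langle \mu^{n}_{t},\vphi\rangle$ is Lipschitz with a constant $C_{\vphi}$ independent of $n$, exploiting the compact support of $\vphi$ away from zero so that the bound is controlled by the conserved first moment $\int xf\,dx\le\rho$ rather than by the divergent $M_{0}$. The paper packages the estimate as $|(x\vphi)'a(x)(u(t)-\Phi(x))|\le C_{\vphi}x$ and then plugs this into the metric $d$ directly via a $\limsup$, whereas you bound the integrand by a constant on $[\delta,R]$ and do an explicit tail-splitting of the series; these are equivalent presentations of the same argument.
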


\begin{proof}
Let $\vphi \in \Cc_c^\infty(0,\infty)$, we have by \eqref{eq:LS-moment-equation} that
\[\frac{d}{dt} \int_0^\infty \vphi(x) x f(t,x)dx = \int_0^\infty (x\vphi)' a(x) (u(t)-\Phi(x)) f(t,x) dx \,.\]
As $\vphi$ is compactly supported and $u$ is bounded by $\rho$, there exists a constant $C_\vphi>0$,
such that $|(x\vphi)'a(x)(u(t)-\Phi(x))|\leq C_\vphi x$, thus for all $t\geq 0$,
\[ \left| \frac{d}{dt} \int_0^\infty \vphi(x) x f(t,x)dx \right| \leq C_\vphi \rho \,.\]
Hence, for any $t_0\in[0,T]$
\[\limsup_{t\to t_0} \sup_n d(\mu^n(t),\mu^n(t_0)) \leq \limsup_{t\to t_0}  \sum_{k\geq 0} 2^{-k} \min(1, C_{\varphi_k}\rho |t-t_0|) = 0 \]
and thus the sequence is equicontinuous on $[0,T]$. We conclude by the
Arzelà-Ascoli theorem. 
\end{proof}

\begin{lemma}
  There exist $u\in \Cc^0([0,T])$ with $0\leq u \leq \rho$ and a subsequence of $\{u^n\}$ which
  converges to $u$ pointwise on $[0,T]$.
\end{lemma}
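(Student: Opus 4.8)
The plan is to prove the statement by the Arzelà--Ascoli theorem, so the task reduces to showing that the family $\{u^n\}$ is uniformly bounded and uniformly equicontinuous on $[0,T]$. Uniform boundedness is immediate from Definition \ref{def:sol}: $u^n(t)=u(t+t_n)=\rho-\int_0^\infty xf(t+t_n,x)\,dx\in(\Phi_0,\rho]\subset[0,\rho]$ for every $n$ and every $t$. For the equicontinuity I would start from the differential identity for $u$, obtained by taking $\vphi(x)=x$ in \eqref{eq:moment_eq_general} and using $b=a\Phi$, namely $u'(t)=-\int_0^\infty a(x)(u(t)-\Phi(x))f(t,x)\,dx$, together with the Cauchy--Schwarz bound
\[ |u'(t)|\le\Big(\int_0^\infty a(x)f(t,x)\,dx\Big)^{1/2}D_k(t)^{1/2}, \]
where $D_k$ is the dissipation associated with the choice $k(x)=\int_0^x\Phi(y)\,dy$ of Remark \ref{rm:choices}, which satisfies $D_k\in L^1(0,\infty)$.

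Next I would control the two factors separately. For the mass factor, \eqref{eq:sublinear-rate} and $\int xf\le\rho$ give $\int af\le C(1+M_0(t))$, and since $M_0'=\mathfrak n(u)$ with $\mathfrak n$ continuous and $u\in[0,\rho]$, the number of aggregates grows at most linearly, $M_0(t)\le M_0(0)+t\max_{[0,\rho]}\mathfrak n$. For the dissipation factor, the tail integrals $\int_{t_n}^{t_n+T}D_k$ vanish as $n\to\infty$ because $D_k\in L^1(0,\infty)$. Combining these through a further Cauchy--Schwarz in time over a subinterval yields, for $0\le s\le t\le T$,
\[ |u^n(t)-u^n(s)|\le\int_{s+t_n}^{t+t_n}|u'|\le(t-s)^{1/2}\,\Big(\sup_{[t_n,t_n+T]}\int af\Big)^{1/2}\Big(\int_{t_n}^{t_n+T}D_k\Big)^{1/2}, \]
which is a candidate uniform modulus of continuity, provided the product of the last two factors stays bounded in $n$.

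The main obstacle is precisely this last point: the mass factor $\int af$ may genuinely grow (linearly, in the worst case, as is already visible for power-law rates with $\alpha<1$) while $D_k$ is only known to be integrable in time, so its tail need not decay fast enough to compensate. Hence $u'$ is \emph{not} uniformly bounded and no naive equicontinuity is available. I would attack this by exploiting the integrability more quantitatively: the window-averaged dissipation $g(t):=\int_t^{t+T}D_k$ is itself integrable on $(0,\infty)$, since by Fubini $\int_0^\infty g\le T\|D_k\|_{L^1(0,\infty)}$, whence $\liminf_{t\to\infty}t\,g(t)=0$. Passing to a subsequence of $\{t_n\}$ along which $t_n\,g(t_n)\to0$ then turns the displayed estimate into a genuine uniform modulus of continuity on $[0,T]$, and Arzelà--Ascoli produces a subsequence of $\{u^n\}$ converging uniformly to a continuous limit $u$; the bound $0\le u\le\rho$ passes to the limit.

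As a fallback, in case one prefers not to refine the sequence at this stage, the same estimate delivers a uniform bound on the total variation $\int_{t_n}^{t_n+T}|u'|$ and one may invoke Helly's selection principle to extract a pointwise-convergent subsequence, upgrading afterwards to continuity of the limit. Either way, the delicate interplay between the possibly unbounded growth of $\int af$ and the mere $L^1$-in-time control of the dissipation $D_k$ is the crux of the argument, and I expect the bulk of the work to lie in extracting enough decay from $D_k\in L^1(0,\infty)$ to dominate that growth.
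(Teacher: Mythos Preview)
Your plan has a real gap at the crucial step. From $g\in L^1(0,\infty)$ with $g\ge 0$ you correctly deduce $\liminf_{t\to\infty} t\,g(t)=0$, but this only guarantees the existence of \emph{some} sequence $s_k\to\infty$ with $s_k g(s_k)\to 0$. The lemma, however, is stated for an \emph{arbitrary} prescribed sequence $\{t_n\}$ (this is how the paper sets things up just before the lemma), and there is no reason why a subsequence of that particular $\{t_n\}$ should satisfy $t_{n_k} g(t_{n_k})\to 0$. The same defect infects your Helly fallback: your bound on $\int_{t_n}^{t_n+T}|u'|$ is again $\big(C(1+t_n)T\big)^{1/2} g(t_n)^{1/2}$, which is uniformly bounded in $n$ only if $(1+t_n)g(t_n)$ is bounded---exactly the estimate you cannot obtain along an arbitrary $\{t_n\}$. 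Since the subsequent arguments in the paper require the conclusion for every sequence $t_n\to\infty$ (this is how the full limit $\mu_t\to 0$ is established), proving it only along special sequences would not suffice.

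The paper avoids this growth-versus-decay balancing act entirely by decomposing $\Psi(u^n(t))=H^n(t)-\tfrac12\int_0^\infty x^2 f^n(t,x)\,dx$, where $H$ is the Lyapunov functional of Corollary~\ref{cor:lyap}. The point is that each piece on the right has compactness for free: $H$ is monotone and bounded, so $H^n(t)\to H^\infty$ uniformly on $[0,T]$; and $t\mapsto\int x^2 f(t,x)\,dx$ has a time derivative that is \emph{uniformly} bounded on $(0,\infty)$ (because $\int x(1+x)f$ is uniformly bounded by mass conservation and the second-moment control from Corollary~\ref{cor:lyap}), so Arzelà--Ascoli applies directly to $\int x^2 f^n$. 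Then $\Psi(u^n)$ converges uniformly and continuity of $\Psi^{-1}$ yields pointwise convergence of $u^n$. The key insight you are missing is that while $u'$ itself may blow up with $M_0$, the combination $\Psi(u)$ differs from a monotone bounded quantity by something whose derivative stays bounded.
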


\begin{proof}
  Notice that by standart truncation arguments with \eqref{eq:LS-moment-equation},
  \eqref{eq:sublinear-rate} and \eqref{H:init},
  \[ \left| \frac{d}{dt} \int_0^\infty x^2 f(t,x) dx \right| \leq K(\rho+1) \int_0^\infty x(1+x)
    f(t,x) dx \,.\]
  The latter is uniformly bounded in time. By Arzelà-Ascoli we may extract a
  subsequence of $t\mapsto \int_{0}^\infty x^{2}f^{n}(t,x)dx$ which converges uniformly on
  $[0,T]$. Moreover, by the monotonicity and nonegativity of $H(t)$ it has a limit $H^{\infty}$ as $t\to \infty$
 and $H^n$ converges uniformly on $[0,T]$ to $H^\infty$. This proves that
  $\Psi(u^n(t)) = H^{n}(t) - \int_{0}^{\infty}x^{2}f^{n}(t,x)dx$ converges uniformly to a function
  $\overline \Psi(t)$ on $[0,T]$. Note that $\Psi = \int_{0}^{x} \Phi^{-1}(y)dy$ is continuous and
  strictly increasing (because $\Phi^{-1}>0$), thus $u^n(t)$ converges to
  $u(t) := \Psi^{-1}(\overline \Psi(t))$ pointwise.
\end{proof}

\begin{remark}
  We might replace $x^2$ above by $x^{1+\eta}$, provided that $(a(x)+b(x))x^\theta \leq C x$ for
  some $\theta \geq 0$ and $C>0$.
\end{remark}

\begin{prop}
$\mu_t$ converges to $0$ in $v-\Mc^+_\rho(0,+\infty)$ as $t\to+\infty$. 
\end{prop}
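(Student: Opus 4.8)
The plan is to run the LaSalle-type identification prepared by the two preceding lemmas: fix an arbitrary sequence $t_n\to+\infty$, pass to the limit along the shifted orbits $f^n(t,x)=f(t+t_n,x)$, and show that the resulting limit measure is the null measure. Since $v-\Mc^+_\rho(0,\infty)$ is compact and metrizable, getting $\mu_{t_n}\to 0$ for every such sequence is equivalent to $\mu_t\to 0$. Along a subsequence we have $\mu^n\to\mu$ in $\Cc^0([0,T],v-\Mc^+_\rho)$ and $u^n(t)=u(t+t_n)\to\tilde u(t)$; the convergence of $u^n$ is in fact uniform on $[0,T]$ to a \emph{continuous} limit $\tilde u$, because the second lemma produces $\Psi(u^n)$ converging uniformly while $\Psi^{-1}$ is uniformly continuous on the compact range $[\Psi(0),\Psi(\rho)]$. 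Note $\mu_0=\lim_n\mu_{t_n}$, so it suffices to prove $\mu\equiv 0$.

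The engine of the proof is the claim $\limsup_{t\to\infty}u(t)=\Phi_0$, i.e. $u\to\Phi_0$ (recall $u>\Phi_0$ always). Granting this for a moment, I close the argument by dissipation: testing the mass measure against $\varphi\in\Cc_c^\infty(0,\infty)$ and using \eqref{eq:LS-moment-equation}, Cauchy--Schwarz bounds the flux term by $\big(\int_0^\infty|(x\varphi)'|^2 a f^n\big)^{1/2}D_k(s+t_n)^{1/2}\le(C_\varphi\rho)^{1/2}D_k(s+t_n)^{1/2}$, with $D_k$ the dissipation of Remark~\ref{rm:choices} for $k(x)=\int_0^x\Phi$. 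Applying the same bound to $\int_0^\infty\varphi\,\tfrac a x(u^n-\Phi)^2\,d\mu^n\le D_k$ for $0\le\varphi\le1$ in $\Cc_c^0(0,\infty)$ and passing to the limit (vague convergence of $\mu^n$ against the fixed compactly supported weight, uniform convergence of $u^n$, dominated convergence in $s$) gives $\int_0^T\!\int_0^\infty\varphi\,\tfrac a x(\tilde u-\Phi)^2\,d\mu_s\,ds=0$. Since $\tilde u\equiv\Phi_0$, $a/x>0$ and $(\Phi_0-\Phi(x))^2>0$ for $x>0$, this forces $\langle\mu_s,\varphi\rangle=0$ for a.e.\ $s$, hence for all $s$ by continuity in $\Cc^0([0,T],v-\Mc^+_\rho)$; in particular $\langle\mu_0,\varphi\rangle=0$, so $\mu\equiv0$.

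The crux is therefore to establish $u\to\Phi_0$, and this is where I expect the main obstacle. Suppose $\bar u:=\limsup u>\Phi_0$ and pick $t_n$ realising it, so $u(t_n)\to\bar u$ and the continuous limit satisfies $\tilde u(0)=\bar u$. By continuity there is $s_0>0$ with $\tilde u\ge\tfrac12(\bar u+\Phi_0)$ on $[0,s_0]$, whence $u(t_n+s)\ge\tfrac12(\bar u+\Phi_0)$ for all $s\in[0,s_0]$ and $n$ large. Choose $\delta>0$ so small that $\Phi(\delta)<\tfrac12(\bar u+\Phi_0)$ and, using \eqref{H:a}, so that the traversal time $\tau^*=\int_0^\delta \frac{dx}{a(x)\,(\tfrac12(\bar u+\Phi_0)-\Phi(x))}<s_0$ (finite precisely because $1/a\in L^1(0,1)$). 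On $(0,\delta)$ the characteristic speed $a(x)(u-\Phi(x))$ stays positive on $[t_n,t_n+\tau^*]$, so every aggregate present at time $t_n$ has size larger than $\delta$ at time $t_n+\tau^*$: those below $\delta$ are carried across it within $\tau^*$ and cannot return (the speed stays positive up to the moving equilibrium $\Phi^{-1}(u)>\delta$), and those already above $\delta$ remain above. As the number of aggregates is conserved along this rightward flow, $\int_\delta^\infty f(t_n+\tau^*,x)\,dx\ge M_0(t_n)$. This contradicts the unconditional bound $\int_\delta^\infty f\le\tfrac1\delta\int_\delta^\infty x f\le\rho/\delta$, since $M_0(t_n)\to+\infty$ by Proposition~\ref{prop:m0}. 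Hence $\bar u=\Phi_0$.

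The delicate point is the transport statement ``every aggregate present at $t_n$ exceeds $\delta$ after time $\tau^*$'', which I would justify through the characteristic representation of solutions --- conservation of aggregate number along the flow, comparison of the nonautonomous field $a(x)(u(t)-\Phi(x))$ with the autonomous one of speed $a(x)(\tfrac12(\bar u+\Phi_0)-\Phi(x))$, and finite traversal granted by \eqref{H:a} --- relying on the well-posedness framework of \cite{Calvo21}; in the weak setting one may instead encode ``having crossed $\delta$'' by a monotone test function and a differential inequality, but hypothesis \eqref{H:a} (characteristics leave any neighbourhood of the origin in finite time) is indispensable. An equivalent route is to observe that the $\omega$-limit trajectory is a stationary weak solution, so its interior flux $a(x)(\bar u-\Phi(x))\,\nu_f(x)$ is constant; matching it with the incoming nucleation flux $\mathfrak n(\bar u)>0$ forces the density $\nu_f=\mathfrak n(\bar u)/(a(\bar u-\Phi))$, whose mass $x\nu_f\sim x^*/(x^*-x)$ is non-integrable at $x^*=\Phi^{-1}(\bar u)$ and thus exceeds $\rho$ --- the same contradiction, at the price of passing the boundary flux to the limit. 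Either way, once $u\to\Phi_0$ is secured the second paragraph yields $\mu_t\to0$ in $v-\Mc^+_\rho(0,\infty)$.
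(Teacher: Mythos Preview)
Your route is genuinely different from the paper's, and the comparison is instructive.

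\textbf{What the paper does.} It never proves $u\to\Phi_0$ at this stage. Using the dissipation of Corollary~\ref{cor:lyap} (that is, $k(x)=\tfrac12 x^2$, so that $K'=\Phi^{-1}$), localization against $\chi_m\le a(x)/x$ with $\chi_m\in\Cc_c^0(0,\infty)$ yields, for the limit pair $(\mu_t,\tilde u)$,
\[
\bigl(\Phi^{-1}(\tilde u(t))-x\bigr)\bigl(\tilde u(t)-\Phi(x)\bigr)\,\mu_t(dx)=0 \quad\text{for all }t.
\]
Since $\Phi^{-1}$ is strictly increasing, $\mu_t$ is either $0$ or a Dirac mass at $x_0=\Phi^{-1}(\tilde u(t_0))>0$. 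In the Dirac case, testing with $\varphi/x\in\Cc_c^0(0,\infty)$ shows $f^n(t_0,\cdot)\,dx$ converges vaguely to a finite point mass, which is then confronted with $M_0(t_0+t_n)\to\infty$ from Proposition~\ref{prop:m0}. The convergence $u\to\Phi_0$ is established only afterwards, in Section~4.3, by a separate argument using the test function $A(x)=\int_0^x 1/a$.

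\textbf{What you do.} You reverse the logical order: you first argue $u\to\Phi_0$ by a transport/characteristics argument (if $\limsup u=\bar u>\Phi_0$, sweep all aggregates past a fixed threshold $\delta$ in time $\tau^*<\infty$, forcing $M_0\le\rho/\delta$). With $\tilde u\equiv\Phi_0$ in hand you then kill $\mu_t$ directly via the dissipation for $k(x)=\int_0^x\Phi$.

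\textbf{Comparison.} Your second paragraph is clean and close in spirit to the paper; only the choice of $k$ differs, and with $\tilde u\equiv\Phi_0$ the simpler quadratic dissipation suffices, sidestepping the Dirac alternative entirely. The price is your third paragraph: the statement ``every aggregate present at $t_n$ exceeds $\delta$ by time $t_n+\tau^*$'' is a characteristic-flow assertion. Under the present hypotheses (only $a',b'\in L^\infty_{\mathrm{loc}}(0,\infty)$, no Lipschitz control at $0$) the forward characteristics and their comparison with the autonomous field need the well-posedness framework of \cite{Calvo21}; the weak-formulation substitute you sketch (monotone test functions) is plausible but not immediate, since a differential inequality for $\int\psi f$ with $\psi$ monotone only gives monotonicity in $t$, not that it reaches $M_0(t_n)$. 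The paper's approach avoids this entirely: it never needs to track trajectories, only to read off from the vanishing dissipation where the limit measure can sit. What you gain is that, if the characteristic step is granted, you get $u\to\Phi_0$ and $\mu_t\to 0$ in one stroke, whereas the paper spends a further subsection (and Lemma~\ref{lm:x^2}) on $u\to\Phi_0$ after the present proposition.
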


\begin{proof}
  By the previous two Lemmas we can extract a subsequence such that $\mu^n$ converges to some $\mu$
  in $\Cc^0([0,T]), v-\Mc_{\rho}^+(0,\infty))$ and $u^n$ converges poinwise to some
  $u \in\Cc^0([0,T])$. Given that $H^n(t)$ converges to $H^\infty$ for all $t\in[0,T]$, we use
  Corollary \ref{cor:lyap} and \eqref{eq:dotH_k} to deduce that
 \[ \lim_{n\to +\infty} \int_0^T \int_0^\infty \left(\Phi^{-1}(u^n(t))- \Phi^{-1}(\Phi(x))\right) \left(u^n(t)-\Phi(x)\right) a(x)f^n(t,x)dxdt = 0 \,.\]
    
Let $m\geq 1$ and $K_m = [\tfrac 1 m, m]$ and define $\chi_m$ a nonnegative continuous function with
compact support in $(0,\infty)$, equal to a positive constant on $K_m$ and such that
$\chi_m(x) \leq a(x)/x$ for all $x>0$. This is possible by the continuity and positivity of $a$. Hence,
\begin{equation}
\lim_{n\to+\infty}  \int_0^T \int_0^\infty \left(\Phi^{-1}(u^n(t)) -\Phi^{-1}(\Phi(x)) )\right)\left(u^n(t)-\Phi(x)\right)\chi_m(x) \mu^n_t(dx)dt =0 \,.
\end{equation}
Expanding the product, we easily see that space integrals converge uniformly in time on $[0,T]$. Moreover, $u^{n}$ converges pointwise and is bounded, hence the above limit can be interchanged with
the time integrals and we conclude that
\[ \int_0^T  \int_{K_{m}} \left(\Phi^{-1}( u(t)) -\Phi^{-1}(\Phi(x)) )\right)\left( u(t)-\Phi(x)\right)\chi_m(x)\mu_t(dx)dt =0 \]
for all $m\geq 1$. Thus, a.e. $t\in[0,T]$ we have the following measure equality 
\[\left(\Phi^{-1}(u(t)) - \Phi^{-1}(\Phi(x)) )\right)\left(u(t)-\Phi(x)\right) \mu_t(dx)  = 0 \,.\]
As $u$ is continuous, the above equality is achieved for all $t\in[0,T]$. Noticing that $\Phi^{-1}$
is increasing, for all $t\in[0,T]$, either $\mu_t = 0$ or both $u(t)>\Phi_{0}$ and there exists
$m_{t}:[0,T]\to (0,\rho]$ such that $\mu_t = m_{t} \delta_{\Phi^{-1}(u(t))}$.

We should prove the second alternative could not occur. Indeed, let $t_{0}$ such that
$\mu_{t_{0}} = m_{t_0}\delta_{x_{0}}$ with $x_{0}= \Phi^{-1}(u(t_{0})) >0$ -because
$u(t_{0})>\Phi_{0}$. For all $\vphi \in \Cc^{0}_{c}(0,\infty)$,
\[\lim_{x\to +\infty} \int_{0}^{\infty} \vphi(x) f^{n}(t_{0},x)dx = m_{t_0}\frac{\vphi(x_{0})}{x_{0}}\]
which proves that $f^{n}(t_{0},x)dx$ converges in $\Mc^{+}_{\rho}(0,\infty)$ and contradicts that
the $0^{th}$-moment $M_{0}(t_{0}+t_{n})$ goes to $\infty$ as $n\to\infty$, by Prop. \ref{prop:m0}.
\end{proof}

We conclude that the limit measure concentrates at the origin.
\begin{lemma}
\label{lm:origin}
There exists $m>0$ and a subsequence (not relabelled) such that 
\[ \lim_{n\to \infty} xf(t_n,x)dx = m \delta_0  \,, \qquad \text{weakly in } \Mc_\rho^+([0,+\infty)) \]
\end{lemma}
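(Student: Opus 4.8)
The plan is to upgrade the vague convergence $\mu_t \to 0$ on $(0,\infty)$, obtained in the preceding Proposition, into a genuine weak (narrow) convergence on the closed half-line $[0,\infty)$. A vague limit on the \emph{open} interval can fail to see the total mass only through loss at the two endpoints $x=0$ and $x=+\infty$; the task is therefore to forbid escape to infinity, to relocate the missing mass at the origin, and finally to prove that the recovered mass is strictly positive.

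First I would establish tightness at infinity. Corollary \ref{cor:lyap} gives a time-uniform bound $\tfrac12\int_0^\infty x^2 f(t,x)\,dx \le H(t)\le H(0)=:C/2$, whence for every $R>0$
\[
\mu_t([R,\infty)) = \int_R^\infty x f(t,x)\,dx \le \frac1R\int_R^\infty x^2 f(t,x)\,dx \le \frac{C}{R}
\]
uniformly in $t$. Combined with $\mu_t([0,\infty)) = \rho - u(t)\le\rho$, this makes $\{\mu_t\}_{t\ge0}$ tight and bounded in $\Mc^+_\rho([0,\infty))$, so by Prokhorov's theorem I may extract from $\{t_n\}$ a subsequence (not relabelled) along which $\mu_{t_n}$ converges narrowly to some $\mu_\infty\in\Mc^+_\rho([0,\infty))$ and $u(t_n)\to u_\infty\in[\Phi_0,\rho]$.

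Next I would identify the limit. For any $\vphi\in\Cc_c^0(0,\infty)$ the preceding Proposition yields $\langle\mu_\infty,\vphi\rangle=\lim_n\langle\mu_{t_n},\vphi\rangle=0$, so $\mu_\infty$ charges neither $(0,\infty)$ nor—by the tightness just proved—the point at infinity; hence $\mu_\infty=m\,\delta_0$ for some $m\ge0$. Testing against $\mathbf 1\in\Cc_b^0([0,\infty))$, which is licit for narrow convergence, and using mass conservation \eqref{eq:LS-con} gives
\[
m=\mu_\infty([0,\infty))=\lim_{n\to\infty}\bigl(\rho-u(t_n)\bigr)=\rho-u_\infty .
\]

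The main obstacle is the strict positivity $m>0$, equivalently $u_\infty<\rho$. I would argue by contradiction: if $u_\infty=\rho$ then the aggregate mass drains entirely into monomers along the sequence, $\int_0^\infty x f(t_n,x)\,dx\to0$, while Proposition \ref{prop:m0} forces $M_0(t_n)\to+\infty$, so with the bounded second moment and $\mu_{t_n}\to0$ vaguely the \emph{number} of aggregates must pile up at the origin, $\int_0^\delta f(t_n,x)\,dx\to\infty$ for every small $\delta$. The tension I would exploit is that persistent nucleation (guaranteed by \eqref{H:nucleation} since $\mathfrak n(u(t_n))\to\mathfrak n(\rho)>0$) keeps injecting fragments that, once they reach sizes $x\gtrsim1$ where \eqref{H:a} makes $a$ non-degenerate and $u_\infty-\Phi(x)$ stays bounded below, feed a dissipation $D_k(t)=\int_0^\infty(u(t)-\Phi(x))^2a(x)f(t,x)\,dx$ that cannot be integrable in time, contradicting Remark \ref{rm:choices}; alternatively the transported mass would violate the uniform second-moment bound. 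Reconciling the divergence of the aggregate number with the integrable dissipation when $a$ is allowed to degenerate at $x=0$—presumably by combining \eqref{eq:bphi} with the Cauchy--Schwarz estimate $\int x f\le(\int b\Phi f)^{1/2}(\int C(1+x^2)f)^{1/2}$ used in the proof of Proposition \ref{prop:m0}—is the delicate point, and is where I expect the bulk of the technical work to lie.
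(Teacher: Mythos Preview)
Your argument for the convergence to $m\,\delta_0$ is correct and essentially matches the paper's: you use tightness from the second-moment bound plus Prokhorov to get narrow convergence directly, while the paper first extracts a vague limit on $[0,\infty)$ and then upgrades to convergence against $\Cc_b$ using the same second-moment control. The two packagings are equivalent.

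Where you diverge from the paper is in the treatment of the strict positivity $m>0$. Despite the statement, the paper's own proof of this lemma only establishes $m\ge 0$; the identification $m=\rho-\Phi_0>0$ is deferred to the next subsection, where it is obtained by first showing $\int_0^\infty x^2 f(t,x)\,dx\to 0$ (Lemma~\ref{lm:x^2}, which uses the present lemma) and then combining this with the convergence of the Lyapunov functional $H$ to deduce that $u(t)\to\Phi_0$ for the \emph{full} limit. Your attempt to prove $m>0$ inside the lemma is therefore not what the paper does, and your sketch has a genuine gap: from $u(t_n)\to\rho$ along a subsequence you cannot conclude that $u(t)$ stays near $\rho$ on a set of times large enough to make the dissipation $\int_0^\infty(u-\Phi)^2 a f$ non-integrable, nor that nucleated fragments accumulate mass before $u$ drops again. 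The contradiction you are reaching for would require control of $u$ on intervals, not just at the times $t_n$, and nothing available at this stage provides that. The clean route is the paper's: accept $m\ge 0$ here and recover strict positivity afterwards via $u(t)\to\Phi_0$.
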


\begin{proof}
  Note that $\nu^n = xf(t_n,x)dx$ defines a sequence of measures on $[0,+\infty)$, such that
  $\nu^n_{|(0,+\infty)} = \mu^n$. The sequence $\{\nu^n\}$ is bounded and thus admits a subsequence
  which converges $v-\Mc^+_\rho([0,+\infty))$; recall that this is the dual of $\Cc^{0}_{c}([0,\infty))$. By the previous
  results, the limit $\nu$ verifies that $\nu_{|(0,+\infty)} = 0$ as measures. Thus
  $\nu = m \delta_0$ for some $m\ge 0$. We recall that $\langle \nu^n , x \rangle$ is uniformly
  bounded in $n$; this control allows to improve the convergence to
  $\langle \nu^n,\vphi \rangle \to \langle \nu , \vphi \rangle$ for all
  $\vphi \in\Cc_b([0,+\infty))$, the set of continuous and bounded functions.
\end{proof}

It remains to identify $m$ which will be the result of the next section.

\subsection{Identification of the concentrated mass}

This paragraph is devoted to the proof that $u(t)$ approaches the critical value $\Phi_{0}$; by mass
conservation, the concentrated mass $m$ in Lemma \ref{lm:origin} is therefore $\rho-\Phi_{0}$. This will conclude
the proof of Theorem \ref{thm:largetime}. In this section we still assume that our Hypotheses
(\ref{H0}-\ref{H:nucleation}) hold true and we take $f$ a solution in the sense of Definition \ref{def:sol} with initial datum $f^{\rm in}$
satisfying \eqref{H:init}.

\begin{lemma}
\label{lm:x^2}
We have that $\displaystyle \lim_{t\to +\infty} \int_0^\infty x^2 f(t,x) dx =0.$
\end{lemma}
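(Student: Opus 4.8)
The plan is to decompose $M_2(t) = \int_0^\infty x^2 f(t,x)\,dx = \int_0^\infty x\,\mu_t(dx)$ into a near-origin part, a bulk part and a tail part, and to show that each becomes small for large $t$. The near-origin and bulk parts are handled by soft arguments, so the genuine difficulty is a uniform-in-time control of the tail $\int_R^\infty x^2 f(t,x)\,dx$, which I would obtain by exploiting the sign of the growth velocity at large sizes.

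First I would record two elementary facts. Since $\Phi^{-1}$ is used in the Lyapunov functional on $[0,\rho]$, the value $x_\ast := \Phi^{-1}(\rho)$ is finite, and because $\Phi$ is strictly increasing \eqref{H1} we have $\Phi(x) > \rho \ge u(t)$ for every $x > x_\ast$ and all $t$. Consequently the transport velocity $a(x)(u(t)-\Phi(x))$ is strictly negative on $(x_\ast,\infty)$: mass at large sizes is always advected towards the origin. For the near-origin part, on $(0,\delta)$ one has $x^2 \le \delta x$, so $\int_0^\delta x^2 f\,dx \le \delta \int_0^\infty x f\,dx = \delta(\rho - u(t)) \le \delta\rho$, uniformly in $t$.

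The tail estimate is the crux. For $R>x_\ast$ I would take a smooth nondecreasing cutoff $\theta_R$ with $\theta_R\equiv 0$ on $[0,R]$ and $\theta_R\equiv 1$ on $[2R,\infty)$, and set $\varphi_R(x)=\theta_R(x)x^2$; then $0\le\varphi_R\le x^2$, $\varphi_R(0)=0$, and $\varphi_R'=\theta_R'x^2+2x\theta_R\ge 0$ with $\supp\varphi_R'\subseteq[R,\infty)$. Because $\varphi_R'$ grows linearly and is not bounded, I would first apply the moment equation \eqref{eq:LS-moment-equation} to the truncations of $\varphi_R$ made affine beyond a large radius (the truncation-plus-Fatou device already used in the proof of Proposition \ref{prop:lyap}); the affine tail keeps $\varphi'\ge 0$ and preserves $\varphi\le x^2$. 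On $\supp\varphi_R'\subseteq(x_\ast,\infty)$ the integrand $a(x)(u(t)-\Phi(x))\varphi_R'(x)f$ is nonpositive, whence $\frac{d}{dt}\int\varphi_R f\le 0$. A monotone passage to the limit then yields the uniform bound
\[
\sup_{t\ge 0}\int_{2R}^\infty x^2 f(t,x)\,dx \le \int_0^\infty \varphi_R(x) f^{\rm in}(x)\,dx \le \int_R^\infty x^2 f^{\rm in}(x)\,dx =: \varepsilon_R,
\]
and $\varepsilon_R\to 0$ as $R\to\infty$ by \eqref{H:init}.

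Finally I would assemble the three pieces. Given $\eta>0$, choose $\delta$ with $\delta\rho<\eta/3$ and $R$ with $\varepsilon_R<\eta/3$; on the fixed compact $[\delta,2R]$ the bulk integral $\int_\delta^{2R} x^2 f(t,x)\,dx = \int_\delta^{2R} x\,\mu_t(dx)$ tends to $0$, since $\mu_t\to 0$ vaguely as $t\to+\infty$ (the Proposition preceding Lemma \ref{lm:origin}) and $x\mathbf{1}_{[\delta,2R]}$ is dominated by a function in $\Cc_c^0(0,\infty)$. Hence $M_2(t)<\eta$ for $t$ large, which is the claim. The only delicate point is the tail step, and everything there hinges on the inward sign of the velocity for $x>x_\ast$; the main obstacle is thus making the truncation argument rigorous (admissibility of the test function in \eqref{eq:LS-moment-equation} and the monotone passage to the limit) rather than any essential analytic difficulty.
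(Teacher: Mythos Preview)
Your proof is correct, but it follows a different route from the paper's. The paper obtains tail tightness via a refined de~la~Vallée~Poussin argument: from \eqref{H:init} it extracts a convex superlinear $\beta$ with $\int \beta(x)xf^{\rm in}<\infty$, then uses Proposition~\ref{prop:lyap} with $k(x)=\beta(x)x$ to propagate this higher moment uniformly in time, so that $\int_R^\infty x^2 f(t)\le \sup_{z>R}(z/\beta(z))\cdot C\to 0$. The bulk is then handled in one stroke by testing $\chi_R(x)=\min(x,R)$ against the weak limit $m\delta_0$ from Lemma~\ref{lm:origin} (note $\chi_R(0)=0$). Your argument instead exploits directly the \emph{sign} of the drift at large sizes: for $x>x_\ast=\Phi^{-1}(\rho)$ the velocity is nonpositive, so the truncated second moment beyond $R>x_\ast$ is nonincreasing in time and dominated by $\int_R^\infty x^2 f^{\rm in}$. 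This is more elementary---no de~la~Vallée~Poussin, no auxiliary Lyapunov functional---and gives a cleaner tail bound with an explicit rate in terms of $f^{\rm in}$. On the other hand, the paper's approach yields a stronger byproduct (uniform propagation of \emph{any} initially finite superlinear moment, cf.\ Remark~\ref{rm:choices}), and does not rely on $\Phi^{-1}(\rho)$ being finite; your argument needs $\rho$ in the range of $\Phi$, which is implicit in the paper's use of $\Psi$ in Corollary~\ref{cor:lyap} anyway, but is worth stating. Finally, your three-piece decomposition (rather than two) is a consequence of invoking only vague convergence on $(0,\infty)$ instead of the weak convergence on $[0,\infty)$ from Lemma~\ref{lm:origin}; both choices are sound.
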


\begin{proof}
  By \eqref{H:init} and a (refined) de La Vallé Poussin's lemma \cite{laurencotWeakCompactnessTechniques2015} there exists
  $\beta \in \Cc^1([0,\infty))$, nonegative, increasing, convex, such that
  $\lim_{x\to\infty}\beta(x)/x = +\infty$ and
\[\int_0^\infty \beta(x) x f^{\rm in}(x)dx < \infty \,.\]
According to Remark \ref{rm:choices} we may use the Lyapunov functional \eqref{eq:H_k} with
$k(x) = \beta(x) x$, which is convex too, to deduce that
\[ \sup_{t>0}\int_0^\infty \beta(x) x f(t,x)dx < \infty.\]
Let a sequence of times $t_n\nearrow \infty$; Lemma \ref{lm:origin} ensures that there exists a
subsequence (not relabelled) such that $xf(t_n,x) dx \to m \delta_0$ weakly. Thus, we let
$\chi_R = \min(x,R)$, so that
\begin{multline}\int_0^\infty x^2 f(t_n,x) dx \leq  \int_0^\infty \chi_R(x) x  f(t_n,x) dx + \int_R^\infty x^2 f(t_n,x)dx \\
\leq \int_0^\infty \chi_R(x) x  f(t_n,x) dx + \sup_{z>R} \left(\frac{z}{\beta(z)}\right) \sup_{t>0} \int_0^\infty \beta(x) x f(t,x)dx\:.
\end{multline}
We take the limit $n\to +\infty$; the first term on the right hand side goes to $0$ since
$xf(t_n,x)dx \to m\delta_0$ and $\chi_R(0)=0$. Finally we take the limit $R\to+\infty$ and the
remaining term goes to $0$. Since this is true for all sequences, we get the full limit.
\end{proof}

\begin{remark} Here again we might deal with $x^{1+\eta}$ instead of $x^2$. 
\end{remark}

\begin{lemma}
We have the following limit $\lim_{t\to +\infty} u(t) = \Phi_{0}$. 
\end{lemma}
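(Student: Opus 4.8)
The plan is to first show that $u(t)$ converges and then to identify its limit as $\Phi_0$ by a transport (flux) argument. For the existence of the limit I would combine the monotone convergence $H(t)\to H^\infty$ established above with Lemma~\ref{lm:x^2}: since $\tfrac12\int_0^\infty x^2 f(t,x)\,dx\to 0$, the defining identity \eqref{eq:H_2} forces $\Psi(u(t))=H(t)-\tfrac12\int_0^\infty x^2 f(t,x)\,dx\to H^\infty$. As $\Psi(v)=\int_0^v\Phi^{-1}$ is continuous and strictly increasing (recall $\Phi^{-1}>0$), it is a homeomorphism onto its image, whence $u(t)\to u_\infty:=\Psi^{-1}(H^\infty)$. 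The constraint $u(t)>\Phi_0$ gives $u_\infty\ge\Phi_0$, so the whole point is to exclude the strict inequality.

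I would then argue by contradiction, assuming $u_\infty>\Phi_0$. First I upgrade Lemma~\ref{lm:origin} to a full limit: every sequence $t_n\to\infty$ admits a subsequence along which $xf(t_n,x)\,dx\to m\delta_0$, and mass conservation together with $u(t_n)\to u_\infty$ pins $m=\rho-u_\infty$ independently of the subsequence, so $xf(t,x)\,dx\to(\rho-u_\infty)\delta_0$ as $t\to\infty$. Fix $\delta>0$ small enough that $\Phi(\delta)<u_\infty$, which is possible since $\Phi(0^+)=\Phi_0<u_\infty$. Testing this weak convergence against a continuous bump supported away from the origin gives $\int_\delta^\infty xf(t,x)\,dx\to 0$, hence $\int_\delta^\infty f(t,x)\,dx\le \delta^{-1}\int_\delta^\infty xf(t,x)\,dx\to 0$; combined with $M_0(t)\to+\infty$ from Proposition~\ref{prop:m0} this yields the key divergence
\[ N_\delta(t):=\int_0^\delta f(t,x)\,dx\ \longrightarrow\ +\infty \quad (t\to\infty). \]

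The crux is to contradict this divergence using the finiteness of the characteristic travel time from the boundary, which is exactly what \eqref{H:a} provides. I would test \eqref{eq:LS-moment-equation} against $\varphi(x)=\int_x^\delta a(y)^{-1}\,dy$ for $x\le\delta$ and $\varphi(x)=0$ for $x\ge\delta$; this $\varphi$ is continuous and bounded with $\varphi(0)=\int_0^\delta a^{-1}<\infty$. Although $\varphi'=-1/a$ need not belong to $L^\infty$ when $a$ degenerates at the origin, the product $a\varphi'=-\indic{(0,\delta)}$ is bounded, so the transport term is well defined; I would legitimise the use of $\varphi$ by approximating it from below by the truncations $\varphi_\veps$ that are constant on $[0,\veps]$ (hence admissible in Definition~\ref{def:sol}) and passing to the limit $\veps\to 0$ in the \emph{integrated} moment identity by monotone and dominated convergence. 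This produces
\[ \frac{d}{dt}\int_0^\infty \varphi f\,dx = \varphi(0)\,\mathfrak n(u(t)) - \int_0^\delta \big(u(t)-\Phi(x)\big)f(t,x)\,dx, \]
understood in integrated form. For $t$ large one has $u(t)-\Phi(x)\ge \tfrac12\big(u_\infty-\Phi(\delta)\big)=:c>0$ uniformly on $[0,\delta]$ (by $\Phi(x)\le\Phi(\delta)$ and $u(t)\to u_\infty$), while $\mathfrak n(u(t))$ stays bounded by continuity of $\mathfrak n$ on $[0,\rho]$. Hence the right-hand side is eventually at most $\varphi(0)\sup_{[0,\rho]}\mathfrak n-c\,N_\delta(t)\to-\infty$, so $t\mapsto\int_0^\infty\varphi f$ decreases below $0$ in finite time, contradicting $\varphi,f\ge 0$. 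Therefore $u_\infty=\Phi_0$, and by mass conservation the concentrated mass of Lemma~\ref{lm:origin} equals $\rho-\Phi_0$, which completes the proof of Theorem~\ref{thm:largetime}.

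I expect the main obstacle to be the rigorous handling of the singular test function $\varphi$ (whose derivative fails to be bounded when $a(0)=0$): one must verify that the truncations $\varphi_\veps$ are genuinely admissible, that $N_\delta(t)$ is finite at each fixed time so every term is meaningful, and that the limit $\veps\to 0$ can be carried inside the time integral. The conceptual content is nonetheless transparent: finiteness of $\int_0^\delta a^{-1}$ means that aggregates nucleated at the boundary sweep through $[0,\delta]$ in bounded time, so their number there cannot diverge while the outward drift $a(u-\Phi)$ remains nondegenerate; the only way to sustain an exploding population near the origin is to let $u-\Phi\to 0$ there, that is $u\to\Phi_0$.
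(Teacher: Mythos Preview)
Your argument is correct. The opening step (convergence of $u(t)$ via $H(t)\to H^\infty$ and Lemma~\ref{lm:x^2}) is identical to the paper's, and both proofs then argue by contradiction using a test function built from the characteristic travel time $\int 1/a$. The execution, however, differs. The paper tests against the \emph{global} primitive $A(x)=\int_0^x 1/a$, obtaining
\[
\frac{d}{dt}\int_0^\infty A(x)f(t,x)\,dx = u(t)M_0(t)-\int_0^\infty \Phi(x)f(t,x)\,dx,
\]
and then bounds $\int\Phi f\le(\Phi_0+\delta)M_0+K_\veps\rho$ using only continuity of $\Phi$ near $0$ and the linear bound on $b/a$ at infinity (no concentration information is needed). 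This yields $\tfrac{d}{dt}\int Af\ge\delta M_0-C$, hence $\tfrac1t\int Af\to+\infty$, which contradicts $\int Af\le K_A(M_0+\rho)$ together with the elementary linear bound $M_0(t)\le M_0(0)+t\sup_{[0,\rho]}\mathfrak n$. Your variant localises to $[0,\delta]$ and reaches a contradiction via nonnegativity of $\int\varphi f$ rather than a growth-rate comparison; the price is an extra input, namely $N_\delta(t)\to+\infty$, which you obtain by first upgrading Lemma~\ref{lm:origin} to a full limit and then reading off $\int_\delta^\infty f\to 0$. So the paper's route is slightly leaner---it uses only the raw divergence of $M_0$ and never invokes the measure-concentration lemmas at this stage---whereas your route is more self-contained in that it avoids tracking the linear-in-$t$ growth of $M_0$ and closes directly by positivity. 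Both are valid; the paper's choice simply decouples this lemma from the preceding compactness analysis.
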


\begin{proof}
  The convergence of $H(t)$ as $t\to \infty$ together with the previous lemma entails that
  $\Psi(u(t))$ converges to some constant $c$. By the (strict) monotonicity of $\Psi$ and its
  continuity, $\Psi^{-1}$ is also continuous and thus $u(t)$ converges to $\theta :=
  \Psi^{-1}(c)$. Assume that $\theta > \Phi_0$. We will prove a contradiction. We introduce the
  function $A(x) = \int_0^x \tfrac 1 a$, well-defined thanks to \eqref{H0} and \eqref{H:a}. Letting
  $A^n(x) = \int_{1/n}^x \tfrac 1 a$ for $x>1/n$ and $A^n(x) = 0$ otherwise, we get that $A^n$ is
  continuous and $A^n{}' \in L^\infty$. Thus we can use $A^n$ as a test function in the moment
  equation \eqref{eq:LS-moment-equation} and pass to the limit ($A^n\leq A$) to get
  \[ \frac{d}{dt} \int_0^\infty A(x) f(t,x) dx = u(t) M_0(t) - \int_0^\infty \Phi(x) f(t,x) dx\,.\]
  This is justified since the function $\Phi$ is continuous on $[0,\veps]$, thus bounded, and there
  exists $K_{\veps}>0$ such that $\Phi(x) =b(x)/a(x) \leq K_{\veps} x$ for all $x>\veps$ thanks to
  \eqref{H0} and \eqref{eq:sublinear-rate}. 
  
  As $\theta> \Phi_0$, we may find $\delta, t_0>0$ such that $u(t)>\Phi_0 +2\delta$ for all $t>t_0$. Next, we may find $\epsilon>0$ such that $\sup_{x\in[0,\veps]}\Phi(x) < \delta +\Phi_{0}$. Then 
  \[ \int_0^\infty \Phi(x) f(t,x) dx \leq \left(\delta+\Phi_{0}\right) M_{0}(t) + K_{\veps} \rho \,,\]
and collecting both estimates we arrive to, for all $t>t_0$,
\[ \frac{d}{dt} \int_0^\infty A(x) f(t,x) dx \geq \delta M_0(t) - K_{\veps} \rho\,.\]

Since $M_0(t) \to +\infty$ we derive that
\[ \lim_{t\to +\infty} \frac 1 t \int_0^\infty A(x) f(t,x) dx = + \infty.\]
But, with \eqref{H:a}, there exists $K_{A}$ such that $A(x)\leq K_{A}(1+x)$ for all $x>0$, thus
\[\int_0^\infty A(x) f(t,x) dx \leq K_A \int_0^\infty (1+x)f(t,x) dx \leq K_A M_0(t) + K_A\rho\:. \] 
Given that $M_0(t)/t$ is bounded we have a contradiction. Hence $u(t) \to \Phi_{0}$ as $t\to \infty$.
\end{proof}

As a consequence of the previous result and mass conservation, we have $m=\rho-\Phi_0$ in the
representation of the limit measure $\nu = m \delta_0$. This concludes the proof of Theorem
\ref{thm:largetime}.

\begin{corollary} Under the same hypotheses of the theorem, we have that all
  moments $M_\theta(t)$ with $\theta \in [0,1)$ diverge as $t\to\infty$.
\end{corollary}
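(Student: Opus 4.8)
The plan is to read off the divergence of the fractional moments directly from the concentration statement of Theorem~\ref{thm:largetime}, namely $xf(t,x)dx \to (\rho-\Phi_0)\delta_0$ weakly in $\Mc^+_\rho([0,\infty))$, together with the strict inequality $\rho>\Phi_0$. The latter is free: since $u(t)>\Phi_0$ for all $t$ and $u(t)=\rho-M_1(t)\le \rho$, we have $\Phi_0<u(t)\le\rho$, so $\rho-\Phi_0>0$. The endpoint $\theta=0$ is nothing but the first conclusion of the theorem, so I only need to handle $\theta\in(0,1)$.

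The crucial elementary observation is the pointwise bound $x^\theta\ge \delta^{\theta-1}\,x$ for all $x\in(0,\delta]$, which holds because $\theta-1<0$ makes $x\mapsto x^{\theta-1}$ nonincreasing, so $x^{\theta-1}\ge\delta^{\theta-1}$ on $(0,\delta]$. This says that precisely on the shrinking region where the mass accumulates, the weight $x^\theta$ dominates the weight $x$ by the large factor $\delta^{\theta-1}$. To exploit the concentration, I would fix $\delta>0$ and choose a continuous cutoff $\chi$ with $0\le\chi\le1$, $\chi(0)=1$ and $\supp\chi\subset[0,\delta]$; such a $\chi$ is continuous and bounded, hence an admissible test function for the weak convergence in $\Mc^+_\rho([0,\infty))$. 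Testing the limit $xf(t,x)dx\to(\rho-\Phi_0)\delta_0$ against $\chi$ gives $\int_0^\infty \chi(x)\,x f(t,x)\,dx\to \rho-\Phi_0$. Combining this with the pointwise bound (applied inside the support of $\chi$) yields
\[
  M_\theta(t)\ \ge\ \int_0^\infty \chi(x)\,x^\theta f(t,x)\,dx\ \ge\ \delta^{\theta-1}\int_0^\infty \chi(x)\,x f(t,x)\,dx ,
\]
and therefore $\liminf_{t\to\infty}M_\theta(t)\ge \delta^{\theta-1}(\rho-\Phi_0)$.

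The conclusion then comes by optimizing over $\delta$: the lower bound above holds for every $\delta>0$, and $\delta^{\theta-1}\to+\infty$ as $\delta\to 0^+$ because $\theta<1$, so letting $\delta\to0$ forces $\liminf_{t\to\infty}M_\theta(t)=+\infty$, i.e. $M_\theta(t)\to+\infty$. I do not expect a genuine obstacle here; the only points needing minor care are using a \emph{continuous} cutoff rather than a sharp indicator (so that it is a legitimate test function for the weak limit) and keeping the limiting mass strictly positive via $\rho-\Phi_0>0$. As a consistency check, note that the first moment $M_1(t)=\rho-u(t)\to\rho-\Phi_0$ remains bounded, which is exactly what one should expect: the $\theta$-moments diverge for $\theta<1$ while $M_1$ does not, the divergence being driven entirely by the $\delta^{\theta-1}$ enhancement near the origin.
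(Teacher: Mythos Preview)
Your argument is correct. The cutoff test against the concentration limit $xf(t,x)dx\to(\rho-\Phi_0)\delta_0$, combined with the elementary pointwise bound $x^\theta\ge\delta^{\theta-1}x$ on $(0,\delta]$, cleanly yields $\liminf_{t\to\infty}M_\theta(t)\ge\delta^{\theta-1}(\rho-\Phi_0)$ for every $\delta>0$, and sending $\delta\to0^+$ gives the divergence. The observation that $\rho>\Phi_0$ follows from $u(t)>\Phi_0$ and $u(t)\le\rho$ is exactly what is needed, and handling $\theta=0$ by invoking the first conclusion of the theorem is fine.

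The paper proceeds differently: it invokes Lemma~\ref{lm:x^2}, which gives $M_2(t)\to0$, and then interpolates the first moment between $M_\theta$ and $M_2$ via H\"older, i.e.\ $M_1\le M_\theta^{1/(2-\theta)}M_2^{(1-\theta)/(2-\theta)}$, so that $M_\theta\ge M_1^{\,2-\theta}/M_2^{\,1-\theta}$. Since $M_1(t)\to\rho-\Phi_0>0$ while $M_2(t)\to0$, the right-hand side diverges. The two routes differ in what they take as input: the paper uses the intermediate result $M_2\to0$ (established \emph{en route} to the theorem), whereas you use only the final weak-limit statement of the theorem itself. The paper's interpolation is marginally shorter and gives an explicit lower bound in terms of $M_1$ and $M_2$; your approach has the virtue of being entirely self-contained from the theorem's conclusions and of making transparent \emph{why} the divergence occurs, namely the $\delta^{\theta-1}$ enhancement of the weight near the origin where all the mass accumulates.
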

This follows from Lemma \ref{lm:x^2}, by interpolating the first moment between moments of order $\theta$ and two.

\section{Long time behavior for power-law rates and numerics}
\label{sec:numeric}

\subsection{Rate of convergence for special cases}
The purpose of this section is to provide more detailed information of the long time behavior in specific power-law cases. Here we assume $a(x)=ax^\alpha $ and $b(x)=bx^\beta$ with $a,b>0$ and $0\le \alpha < \beta \le 1$. Then $\Phi(x)=\frac{b}{a} x^{\beta-\alpha}$ verifies $\Phi_0=0$ and is monotonically increasing.

We may define the classical moments of $f$ as
$$
M_k(t):= \int_0^\infty x^k f(t,x)\, dx, \quad k\in \mathbb{R}^+.
$$ 
In the case of power-law kinetic rates, the time derivative of the classical moments reads
    \[
    \frac{d M_k}{dt} = k a u(t)M_{k+\alpha-1}(t)-k b M_{k+\beta-1}(t)\:.
    \]


In what follows we consider some particular choices of the exponents $\alpha$ and $\beta$ for which more specific information can be given (this works also as a guide to numerical conjectures for the general case, see Section \ref{sec:simul} below).

\begin{lemma}[Case $\alpha=0$]
\label{lm:y}
Assume that $\mathfrak{n}(u)=u^{i_0}$ -a bound from above by a power law works the same way. 
Then there exists some $C>0$ such that, for advanced $t$

\[
M_0(t)\le C  t^{\frac{1}{1+i_0\beta}},\quad u(t)\le C  t^{-\frac{\beta}{1+i_0\beta}}\:.
\]
\end{lemma}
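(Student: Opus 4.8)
The plan is to reduce the PDE to a pair of scalar relations for $M_0$ and $u$ and then track a single scale-invariant quantity. With $\alpha=0$ the rate $a(x)\equiv a$ is constant, $\Phi(x)=\tfrac{b}{a}x^\beta$, and the moment identity $\tfrac{d}{dt}M_k=kau(t)M_{k-1}-kbM_{k+\beta-1}$ specializes, for $k=0$, to $M_0'=\mathfrak{n}(u)=u^{i_0}$ (I use equality; a power-law upper bound changes nothing), and, for $k=1$ together with mass conservation $M_1=\rho-u$, to $u'=-auM_0+bM_\beta$. The key observation is that both assertions are equivalent to \emph{two-sided} control of $P(t):=u(t)M_0(t)^\beta$: indeed $\tfrac{d}{dt}M_0^{1+i_0\beta}=(1+i_0\beta)M_0^{i_0\beta}u^{i_0}=(1+i_0\beta)P^{i_0}$, so a bound $0<\underline{P}\le P\le\overline{P}$ for large $t$ integrates to $M_0(t)^{1+i_0\beta}\asymp t$, whence $M_0\asymp t^{1/(1+i_0\beta)}$ and $u=PM_0^{-\beta}\asymp t^{-\beta/(1+i_0\beta)}$. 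Thus the lemma follows once $P$ is shown to be bounded above and below by positive constants.

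For the \textbf{upper bound} on $P$ I would set up a Riccati-type differential inequality. Differentiating and inserting the two evolution laws gives $P'=-aM_0P+bM_\beta M_0^\beta+\beta u^{1+i_0}M_0^{\beta-1}$. By Jensen applied to the probability measure $f/M_0$ (equivalently log-convexity of moments), $M_\beta\le\rho^\beta M_0^{1-\beta}$, so $bM_\beta M_0^\beta\le b\rho^\beta M_0$; the last term is bounded by $\beta\rho^{1+i_0}$ once $M_0\ge1$. Hence $P'\le aM_0\!\left(\tfrac{b\rho^\beta}{a}-P\right)+\beta\rho^{1+i_0}$. Since $M_0(t)\to\infty$ by Proposition \ref{prop:m0}, a standard barrier argument shows $P$ cannot remain above $\tfrac{b\rho^\beta}{a}+1$, giving $P\le\overline{P}$ for large $t$. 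In particular $u\le\overline{P}\,M_0^{-\beta}$, and feeding this into $M_0'=u^{i_0}$ yields $M_0\le Ct^{1/(1+i_0\beta)}$, the first estimate.

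The \textbf{lower bound} on $P$ is the crux, and I expect it to be the main obstacle. The same computation gives $P'\ge-aM_0P+bM_\beta M_0^\beta$, so a barrier argument from below would close \emph{provided} one has a matching \emph{lower} bound $M_\beta\ge c_0M_0^{1-\beta}$. Such an inequality is false for general measures (mass placed at a well-separated large scale makes $M_\beta\ll M_0^{1-\beta}$), so it must be extracted from the dynamics. Here I would exploit the already established facts $M_1\to\rho$ and $\int x^2f\,dx\to0$ from Lemma \ref{lm:x^2}. By log-convexity of $s\mapsto\int x^s(xf)\,dx$ evaluated at $s=0$ between $s=\beta-1$ and $s=1$, one gets $M_1\le M_\beta^{1/(2-\beta)}M_2^{(1-\beta)/(2-\beta)}$, that is $M_\beta\ge M_1^{2-\beta}M_2^{-(1-\beta)}$. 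Since $M_1\ge\rho/2$ eventually, the desired bound $M_\beta\ge c_0M_0^{1-\beta}$ reduces to the single quantitative estimate $M_0M_2\le C$.

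Establishing $M_0M_2\le C$ is where the real work lies. The natural attempt is another barrier argument for $Q:=M_0M_2$, using $Q'=u^{i_0}M_2+2au(\rho-u)M_0-2bM_0M_{1+\beta}$ together with the already-proven upper bound on $P$; the difficulty is that the growing production term $2a\rho uM_0$ must be absorbed by $-2bM_0M_{1+\beta}$, which requires a lower bound on $M_{1+\beta}$ that the moment-interpolation machinery does not supply for free. I would therefore either (i) close this via a dedicated comparison argument for $Q$ powered by the dissipation bound $\int_0^\infty\!\int_0^\infty a(u-\Phi)^2f\,dx\,dt<\infty$ of Corollary \ref{cor:lyap}, or (ii) bypass the lower bound on $M_0$ entirely by showing that $u$ is \emph{eventually almost monotone}, so that $u(t)^{i_0}\lesssim\tfrac1t\int_{t/2}^t u^{i_0}\,ds\le\tfrac1tM_0(t)\le Ct^{-i_0\beta/(1+i_0\beta)}$, yielding $u\le Ct^{-\beta/(1+i_0\beta)}$ straight from the first estimate. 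In either case, once $M_0\ge ct^{1/(1+i_0\beta)}$ is secured, the chain $u\le\overline{P}\,M_0^{-\beta}\le C't^{-\beta/(1+i_0\beta)}$ completes the proof.
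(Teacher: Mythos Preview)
Your treatment of the scale-invariant quantity $P=uM_0^\beta$ and the resulting upper bound $M_0\le Ct^{1/(1+i_0\beta)}$ is exactly the paper's proof: they set $y=uM_0^\beta$, compute $y'=(bM_\beta-auM_0)M_0^\beta+\beta u\,\mathfrak n(u)\,M_0^{\beta-1}$, use the interpolation $M_\beta\le\rho^\beta M_0^{1-\beta}$ to get $y'\le M_0(b\rho^\beta-ay)+\varphi(t)$ with $\varphi\to 0$, conclude $y\le\rho^\beta b/a$ for large $t$, and then integrate $\tfrac{d}{dt}M_0^{1+i_0\beta}\le(1+i_0\beta)(\rho^\beta b/a)^{i_0}$.

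Where you diverge is on the second estimate. The paper stops right after the $M_0$ bound and simply writes ``Our statements follow easily from here''; it never establishes a positive lower bound on $y$ (the statement ``$y\in[0,\rho^\beta b/a]$'' is just the trivial $y\ge 0$ together with the upper bound). You are correct that $u\le\overline P\,M_0^{-\beta}$ yields $u\le Ct^{-\beta/(1+i_0\beta)}$ only through a matching lower bound $M_0\ge ct^{1/(1+i_0\beta)}$, and the bulk of your proposal is an attempt to supply one. That attempt, however, is not complete: you reduce the question to $M_0M_2\le C$ and then outline two possible mechanisms without carrying either through. So on the $u$-bound you have identified a genuine lacuna that the paper does not address --- but your own argument does not close it either.

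In short: on the first inequality your proof coincides with the paper's; on the second you are more scrupulous than the paper (which gives no argument beyond the assertion), yet your write-up remains incomplete precisely at the step you yourself flag as ``where the real work lies''. If the aim is to reproduce the paper's proof, the entire lower-bound discussion can be dropped; if the aim is a rigorous proof of the $u$-estimate, the gap you isolated still needs to be filled.
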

\begin{proof}
First we prove that $u(t)M_0^\beta(t)\le C$ for every $t\ge 0$. For that aim, 
let $y(t)=u(t)M_0^\beta(t)$. We readily compute
\[
y'=(b M_\beta-u a M_0)M_0^\beta + \beta u \mathfrak n(u) M_0^{\beta-1}\:.
\]
 Here we can use that moment interpolation yields the estimate 
\[
M_\beta(t)\le M_1^\beta(t) M_0^{1-\beta}(t)\le \rho^\beta \, M_0^{1-\beta}(t).
\]
Therefore, 
\[
y'\le  \varphi(t)+\, b\rho^\beta M_0-a uM_0 M_0^\beta = M_0(b\rho^\beta-a y)+\varphi(t)
\]
with $\varphi(t)\to 0$ as $t\to \infty$. This implies that $y(t)$ belongs to $[0,\rho^\beta b/a]$ for advanced times.

Thanks to our assumption on $\mathfrak{n}$ we have
\[
M_0'=\frac{u^{i_0}M_0^{\beta i_0}}{M_0^{\beta i_0}}\le \frac{\rho^{\beta i_0}(b/a)^{i_0}}{M_0^{\beta i_0}}
\]
and therefore
\[
\frac{d}{dt} M_0^{1+i_0\beta} \le (1+i_0\beta)\rho^{i_0\beta}(b/a)^{i_0}.
\]
Our statements follow easily from here.
\end{proof}

We expect to have equality in the previous estimates. Actually, in the general case $\alpha<\beta$ we conjecture that:
\begin{equation}\label{eq:conjecture}
M_0(t)\sim  t^{\frac{1}{1+i_0(\beta-\alpha)}},\quad u(t)\sim  t^{-\frac{\beta-\alpha}{1+i_0(\beta-\alpha)}}
\end{equation}
for advanced $t$. Likewise, we expect that $u(t)M_0^{\beta-\alpha}(t)$ will have a finite limit. Compare with the numerical simulations below.

We prove another partial result along the same lines.
\begin{lemma}[case $\beta=1$]
There holds that $$\lim_{t\to \infty} u(t)M_\alpha(t)=b\rho/a.$$
\end{lemma}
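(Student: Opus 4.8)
The plan is to reduce the statement to a single scalar fact about the monomer concentration and then to exploit the special structure of the case $\beta=1$. First I would use mass conservation \eqref{eq:LS-con}, which here reads $u(t)+M_1(t)=\rho$, together with Theorem \ref{thm:largetime} (so that $u(t)\to\Phi_0=0$ and hence $M_1(t)\to\rho$). The moment identity stated at the start of this section, taken with $k=1$ and $\beta=1$, gives the exact relation $M_1'(t)=a\,u(t)M_\alpha(t)-b\,M_1(t)$, equivalently
\[ u(t)M_\alpha(t)=\tfrac1a\bigl(bM_1(t)+M_1'(t)\bigr)=\tfrac1a\bigl(b(\rho-u(t))-u'(t)\bigr). \]
Since $bM_1(t)\to b\rho$, the claim $uM_\alpha\to b\rho/a$ is \emph{equivalent} to $u'(t)\to 0$. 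Moreover, because $u$ is bounded in $[0,\rho]$ and converges, one immediately gets $\limsup_t u'\ge 0$ and $\liminf_t u'\le 0$ (otherwise $u$ would leave $[0,\rho]$), which already yields the sandwich $\liminf_t uM_\alpha\le b\rho/a\le\limsup_t uM_\alpha$. The whole difficulty is thus to upgrade this to a genuine limit.

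The key structural step I would carry out is to show that $u$ is \emph{eventually monotone non-increasing}. For this I would examine (formally, pending the time-differentiability of $M_\alpha$ discussed below) the sign of $u''$ at any critical time $t^\ast$, where $u'(t^\ast)=0$, i.e. $a\,uM_\alpha=bM_1$ there. Differentiating once more and using $M_\alpha'=\alpha a\,u M_{2\alpha-1}-\alpha b\,M_\alpha$ (this is where $\beta=1$ keeps the detachment term clean) one finds $u''(t^\ast)=\alpha a\,u\bigl(bM_\alpha-a\,uM_{2\alpha-1}\bigr)$. Now Cauchy--Schwarz, $M_\alpha^2\le M_1\,M_{2\alpha-1}$, combined with the flux balance $a\,uM_\alpha=bM_1$ valid at $t^\ast$, forces $a\,uM_{2\alpha-1}\ge bM_\alpha$, so that $u''(t^\ast)\le0$. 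Hence every critical point of $u$ is a local maximum, $u'$ can change sign at most once (from $+$ to $-$), and $u$ is unimodal with $u'\le0$ on some tail $[t_0,\infty)$. Two consequences follow: first $\int_{t_0}^\infty(-u')=u(t_0)<\infty$, so $-u'\in L^1$ and therefore $\liminf_t(-u')=0$; second $uM_\alpha=\tfrac1a(bM_1-u')\ge\tfrac ba M_1\to b\rho/a$. Together with the previous sandwich this pins down the lower bound $\liminf_t uM_\alpha=b\rho/a$ exactly.

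It then remains to prove $\limsup_t(-u')=0$, i.e. the full statement $u'\to 0$. My plan here is a Barbalat-type argument: since $-u'\ge 0$ is integrable on $[t_0,\infty)$, it suffices to show that $u'$ is uniformly continuous there, whence $u'\to 0$. \textbf{This is the step I expect to be the main obstacle.} Uniform continuity of $u'=bM_1-a\,uM_\alpha$ rests on controlling $\tfrac{d}{dt}(uM_\alpha)$, hence $u''$, whose problematic terms are $\alpha a^2 u^2 M_{2\alpha-1}$ and $a\,u'M_\alpha$; for $\alpha<1/2$ the negative-order moment $M_{2\alpha-1}=\int_0^\infty x^{2\alpha-1}f\,dx$ need not even be finite, since the mass concentrates at $x=0$ by Theorem \ref{thm:largetime} and Lemma \ref{lm:x^2}, so no naive bound is available (this is also the regularity gap flagged above). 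I would therefore first try to establish an a priori bound $uM_\alpha\le C$; via the interpolation $M_\alpha\le M_0^{1-\alpha}\rho^\alpha$ this amounts to a bound on $u\,M_0^{1-\alpha}$, which is precisely the content of the conjecture \eqref{eq:conjecture} for $\beta=1$, and such a bound would make $u'$ bounded and help tame the remaining terms. As an alternative route I would keep in reserve a Tauberian argument based on the exact identity $\tfrac{d}{dt}\!\left(e^{bt}M_1\right)=a\,e^{bt}uM_\alpha$: it shows directly that the exponentially weighted average of $uM_\alpha$ converges to $b\rho/a$, and the monotonicity of $u$ obtained above is exactly the one-sided (slowly-decreasing) condition that a Tauberian theorem would require in order to convert this Abel limit into an ordinary one.
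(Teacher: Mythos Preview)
Your primary line---establishing eventual monotonicity of $u$ via a second-derivative sign check at critical times, then appealing to a Barbalat argument---is a genuinely different route from the paper's, and the obstacle you already flag is decisive for $\alpha<\tfrac12$: the test function $x^\alpha$ is not admissible in Definition~\ref{def:sol} (its derivative is unbounded near the origin), so the identity $M_\alpha'=\alpha a u M_{2\alpha-1}-\alpha b M_\alpha$ is only formal, and $M_{2\alpha-1}$ need not even be finite since Theorem~\ref{thm:largetime} concentrates all mass at $x=0$. Both the Cauchy--Schwarz step $M_\alpha^2\le M_1 M_{2\alpha-1}$ and the uniform-continuity bound on $u'$ required for Barbalat break down at exactly the same place, and there is no evident repair within this strategy.

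The paper instead follows the route you list as your fallback. From $M_1'=a\,uM_\alpha-bM_1$ it writes the Duhamel representation
\[
M_1(t)=M_1(0)e^{-bt}+a\,e^{-bt}\!\int_0^t u(\tau)M_\alpha(\tau)\,e^{b\tau}\,d\tau,
\]
uses $M_1(t)\to\rho$ (mass conservation together with $u\to0$), and then argues directly by contradiction rather than invoking an abstract Tauberian theorem: if $\liminf uM_\alpha>b\rho/a$, one inserts the lower bound on a tail $[T,\infty)$ and obtains $\lim M_1\ge\rho+a\varepsilon/b$; the symmetric assumption $\limsup uM_\alpha<b\rho/a$ is handled in the same way. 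This needs no derivative of $M_\alpha$, no negative-order moment, and no a~priori bound on $uM_\alpha$. You should be aware, though, that as written this pair of contradictions yields only $\liminf uM_\alpha\le b\rho/a\le\limsup uM_\alpha$---precisely the trivial sandwich you already derived from $\liminf u'\le 0\le\limsup u'$---so the paper's argument, taken literally, shares the very gap you identified; a one-sided Tauberian hypothesis (eventual monotonicity, or a uniform bound on $uM_\alpha$) is still what is missing to turn the Abel limit into a genuine one.
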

\begin{proof}
In this specific case we have
\[
\frac{dM_1}{dt}=a u M_\alpha - b M_1.
\]
This is integrated as
\[
M_1(t)=M_1(0)e^{-bt} +a e^{-bt} \int_0^t u(\tau) M_\alpha(\tau)e^{b\tau}\, d\tau
\]
Since $u(t)\to 0$ as $t\to \infty$, we have that $M_1\to \rho$  as $t\to \infty$. Now we argue by  contradiction. Assume that $\liminf_{t\to \infty} uM_\alpha>b\rho/a$. Then, given $\epsilon>0$ there exists some $T>0$ such that $uM_\alpha>b\rho/a+\epsilon$ for $t\ge T$. Therefore, for each $t>T$,
\[
M_1(t)\ge M_1(0)e^{-bt} +a e^{-bt} \int_0^T u(\tau) M_\alpha(\tau)e^{b\tau}\, d\tau
+a(b\rho/a+\epsilon)e^{-bt}\int_T^t e^{b\tau}\, d\tau 
\]
\[
=e^{-bt} \left(M_1(0)+a \int_0^T u(\tau) M_\alpha(\tau)e^{b\tau}\, d\tau\right)+e^{-bt}a (b\rho/a+\epsilon)\frac{e^{bt}-e^{bT}}{b}\:.
\]
Taking the limit $t\to\infty$ we obtain $\lim_{t\to \infty} M_1(t)\ge \rho+ a\epsilon/b$, which is a  contradiction. We can prove in a similar way that $\limsup_{t\to \infty} uM_\alpha<b\rho/a$ leads to a a contradiction. Thus our statement follows.
\end{proof}

\subsection{Numerical experiments and discussions}
\label{sec:simul}

To approach numerically the Lifshitz-Slyozov equation we use a standard finite volume scheme with an upwind approximation of the fluxes. The behaviour of the solutions is depicted in Figures \ref{fig:1} to \ref{fig:3}.


Figure \ref{fig:1} shows time evolution of the distribution for two distinct initial conditions with rates given by $\alpha = 1/3$ and $\beta = 2/3$, see details in the figure's legend. Note that we have no explicit solution at hand and the rate of convergence is unknown in this case. It seems that, roughly speaking, the particular details of the initial condition are lost as time advances
and the concentration behaviour that ensues seems to follow 
a universal profile. Figure \ref{fig:2} shows the rate of convergence of $u$, and divergence of $M_{0}$, to be polynomial. We compare with the conjecture \eqref{eq:conjecture} and the results agree. In fact, this is robust according to various set of coefficients (results not shown). To further capture the universal profile, we plot in Figure \ref{fig:3} the tail distribution $F(t,x)=\int_x^\infty f(t,x)dx$, while normalising the mass and the front speed. Specifically, choosing $x\mapsto 1/(1+M_0(t)) F(t,x/(1+M_0(t))$, we observe that several initial conditions lead to a similar asymptotic profile.

\section{Long-time behavior for constant $\Phi$}
\label{sec:const}

In this section we discuss the special case of $b(x)=\Phi_{0} a(x)$ for some given $\Phi_{0}>0$. Particular instances of this situation (with outflow behavior or zero boundary conditions) have been studied in \cite{Collet02,Collet04}. Here we investigate the case of nucleation boundary conditions \eqref{eq:bc}. For this section we
still assume \eqref{H0}, \eqref{H:ab_deriv} and \eqref{H:a}.  Note that  Hypothesis \eqref{H1} is
replaced by $\Phi$ being constant. Here \eqref{eq:bphi} is not required and \eqref{H:nucleation} is
replaced by the Lipschitz continuity of $\mathfrak n$ on $[\Phi_{0},\rho]$ and
\[ \mathfrak n (z) \geq \mathfrak n(\Phi_{0}) = 0\]
for all $z\in[0,\rho]$, that is, nucleation cannot fuel anymore at the critical value. Finally,
\eqref{H:init} reduces to $f^{\rm in} \in L^{1}(\Rb_{+},(1+x)dx)$ only. We shall assume morevover
that $a\in \Cc^{1}(0,\infty)$, which entails that characteristics curves are well-defined. Therefore, in this
case of constant $\Phi$,  existence and uniqueness of global solutions is ensured, see \cite{Calvo21}.

Further, we let 
\[A(x) = \int_{0}^{x} \frac 1 {a(y)} dy\] for all $x\geq 0$, which is an increasing
$\Cc^{1}$-diffeomorphism from $\Rb_{+}$ into $\Rb_{+}$ with $A(0)=0$. We denote
\[ M_{a}(t) = \int_{0}^{\infty}a(x)f(t,x) dx \]
which is finite for all $t\geq 0$ by \eqref{eq:sublinear-rate}.

The main result of this section rules out concentration phenomena for the density and provides an 
explicit rate of
convergence for $u$:
\begin{theorem}
\label{thm:alpha=beta}
Under the above hypotheses, any global solution in the sense of Definition \ref{def:sol} satisfies:
\begin{itemize}
\item \label{it:0_1} $\lim_{t\to\infty} u(t) = \Phi_{0}$,
\item \label{it:0_3} There exists $\overline f \in L^{1}(\Rb_{+},(1+x)dx)$ such that
  \[ \lim_{t\to+\infty} f(t,\cdot) = \overline f \,, \quad w-L^{1}(\Rb_{+})\,.\]
\end{itemize}	
\end{theorem}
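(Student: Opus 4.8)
The key structural simplification in the constant-$\Phi$ regime is that the transport speed reduces to $a(x)u(t)-b(x)=a(x)(u(t)-\Phi_0)$, which is positive on $\Rb_+$ because $u(t)>\Phi_0$. The plan is to straighten the characteristics by the change of variables $y=A(x)$, under which transport becomes space-independent. Writing $g(t,y)\,dy$ for the push-forward of $f(t,x)\,dx$ by $A$ (so that $g(t,y)=a(A^{-1}(y))f(t,A^{-1}(y))$), I expect $g$ to solve a pure advection equation $\partial_t g+c(t)\partial_y g=0$ with space-independent speed $c(t):=u(t)-\Phi_0\ge 0$ and boundary injection $c(t)g(t,0)=\mathfrak n(u(t))$. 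The Lipschitz bound on $\mathfrak n$ together with $\mathfrak n(\Phi_0)=0$ gives the crucial uniform control $g(t,0)=\mathfrak n(u(t))/(u(t)-\Phi_0)\le L$. Everything will then be governed by the total displacement $\tau(t):=\int_0^t c(s)\,ds$.

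First I would establish the first item. Testing \eqref{eq:LS-moment-equation} with $\varphi(x)=x$ gives $M_1'(t)=(u(t)-\Phi_0)M_a(t)\ge 0$, and since $u=\rho-M_1$ this shows $u$ is nonincreasing; hence $u(t)\downarrow u_\infty\ge\Phi_0$. To rule out $u_\infty>\Phi_0$ I argue by contradiction: then $c(t)\ge u_\infty-\Phi_0>0$, so $\tau(t)\to+\infty$. Because $a\in\Cc^1$ the characteristics are well defined and the solution is Lagrangian; a parcel initially at $x_0$ sits at $A^{-1}(A(x_0)+\tau(t))$ at time $t$, and a parcel nucleated at time $s$ sits at $A^{-1}(\tau(t)-\tau(s))$. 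Since $A^{-1}(z)\to+\infty$ as $z\to\infty$, applying Fatou's lemma to the representation $M_1(t)=\int_0^\infty A^{-1}(A(x_0)+\tau(t))f^{\rm in}(x_0)\,dx_0+\int_0^t A^{-1}(\tau(t)-\tau(s))\mathfrak n(u(s))\,ds$ forces $M_1(t)\to+\infty$ as soon as any mass is present, contradicting $M_1=\rho-u\le\rho$. Thus $u_\infty=\Phi_0$, and moreover $\tau_\infty:=\int_0^\infty(u-\Phi_0)\,dt<\infty$: the total displacement is finite.

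The finiteness of $\tau_\infty$ drives the second item. With a finite total displacement the advected profile stabilises: for $y>\tau_\infty$ the solution is the translated initial datum, $g(t,y)=g^{\rm in}(y-\tau(t))\to g^{\rm in}(y-\tau_\infty)$, whereas for $0<y<\tau_\infty$ it equals the injected value $\mathfrak n(u(s))/c(s)$ at the unique time $s=s(t,y)$ with $\tau(t)-\tau(s)=y$, which converges as $t\to\infty$ to a limit bounded by $L$. I would prove $g(t,\cdot)\to g_\infty$ strongly in $L^1(dy)$ by combining $L^1$-continuity of translations (interior part), dominated convergence through the bound $L$ (boundary part), and the fact that the mismatch region $[\min(\tau(t),\tau_\infty),\max(\tau(t),\tau_\infty)]$ has vanishing length. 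Undoing the change of variables gives $\|f(t,\cdot)-\overline f\|_{L^1(dx)}=\|g(t,\cdot)-g_\infty\|_{L^1(dy)}\to 0$ with $\overline f(x)=g_\infty(A(x))/a(x)$; testing against $\varphi\in L^\infty$ then yields convergence in $w$-$L^1(\Rb_+)$. The identities $\int_0^\infty\overline f\,dx=\lim_t M_0(t)=M_0(0)+\int_0^\infty\mathfrak n(u)\,dt\le M_0(0)+L\tau_\infty<\infty$ and $\int_0^\infty x\overline f\,dx=\lim_t M_1(t)=\rho-\Phi_0$ show $\overline f\in L^1(\Rb_+,(1+x)dx)$.

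The main obstacle is rigour at the level of the Lagrangian representation: justifying the explicit solution formula and the boundary-injection term for weak solutions in the sense of Definition \ref{def:sol}, rather than for classical characteristics. A clean route is to validate the advection equation for $g$ directly from \eqref{eq:LS-moment-equation} by using test functions of the form $\varphi=\psi\circ A$, so that $\varphi'=(\psi'\circ A)/a$ and the speed factors out as $c(t)$, reducing all the above computations to one-dimensional transport identities. A secondary technical point is the passage to the weighted space: strong $L^1(dx)$ convergence together with $M_1(t)\to\rho-\Phi_0$ and a.e. convergence along a subsequence lets one invoke a Scheffé-type argument to upgrade to $L^1(\Rb_+,(1+x)dx)$, ensuring that no mass escapes to infinity and pinning down the stated weak limit.
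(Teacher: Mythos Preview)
Your approach is essentially the same as the paper's: both rest on the primitive $A(x)=\int_0^x 1/a$, the explicit characteristic representation (the paper quotes the mild formulation from \cite{Calvo21}; your change of variables $y=A(x)$ is an equivalent repackaging), and the key fact that the total displacement $\tau_\infty=\overline\gamma$ is finite, after which the limit density is read off by passing to the limit in the representation.

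One logical slip to fix: as written, you deduce $u_\infty=\Phi_0$ by contradiction and then assert ``moreover $\tau_\infty<\infty$''. The second statement does not follow from the first (a nonincreasing function tending to $\Phi_0$ need not have $u-\Phi_0$ integrable). The correct order is the paper's: your Fatou argument actually shows directly that $\tau(t)$ must remain bounded (drop the hypothesis $u_\infty>\Phi_0$ and start instead from $\tau(t_n)\to\infty$; the same computation gives $M_1(t_n)\to\infty$). Equivalently, as in the paper's Step~2, from $X(t;0,x)\ge A^{-1}(\tau(t))$ one gets $A^{-1}(\tau(t))\le \rho/M_0(0)$, hence $\tau$ is bounded. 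Then $u_\infty=\Phi_0$ follows from monotonicity of $u$ and integrability of $u-\Phi_0$.

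A genuine difference worth noting: where the paper passes to the limit in the mild formulation only against bounded continuous $\varphi$ (dominated convergence in \eqref{eq:dual_nonrigoureux}), your argument via $L^1$-continuity of translations for the initial-data part and the uniform bound $g(t,0)=\mathfrak n(u)/(u-\Phi_0)\le L$ on the injected part actually yields \emph{strong} $L^1(\Rb_+)$ convergence of $f(t,\cdot)$, which is more than the stated weak convergence. Combined with $M_1(t)\to\rho-\Phi_0$, your Scheff\'e-type step then upgrades this to strong convergence in $L^1(\Rb_+,(1+x)dx)$.
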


Indeed, the limit $\overline f$ has a representation formula, given at end of the proof of Theorem \ref{thm:alpha=beta}, see
\eqref{eq:limrep}. This representation depends noticeably on the chosen initial condition. Furthermore, if $a$ is
non-decreasing in $x$ then $M_{a}$ is increasing in $t$ and our proof shows that the trend to equilibrium is exponential:
 \[ 0 < u(t) - \Phi_{0} \leq (u(0)-\Phi_{0}) e^{-M_{a}(0)t}\,.\]

\begin{proof} We proceed in a number of separate steps. During the proof we  assume $f^{\rm in} \neq
   0$. There is no loss of generality in so doing, as the given boundary conditions ensure that starting with $f^{\rm in}=0$ produces some nonvanising $f(t_0)$ for some $t_0>0$ small, which we may take as a new initial condition.

{Step 1: Mild formulation}. We will represent the solution in terms of
   characteristics. For that aim we will use several results from \cite{Calvo21}. The equation
   determining the characteristics reads
\begin{equation}
\label{eq:char1}
\frac d {ds} X(s;t,x) = (u(s)-\Phi_{0}) a(X(s;t,x))\,; \quad X(t;t,x)=x\,.
\end{equation}

For any given $x>0$ we can ensure existence and uniqueness of a maximal solution $X(\cdot; t, x)$ on
$(\sigma_{t}(x), \infty)$. Note the following: either $\sigma_{t}(x)=0$ and $\lim_{s\to 0} X(s,t,x)> 0$, or
$\sigma_{t}(x)>0$ and $\lim_{s\to \sigma_{t}(x)} X(s,t,x) = 0$.

Therefore, for any $s\in (\sigma_t(x),+\infty)$ we can integrate \eqref{eq:char1} as follows:
\[ X(s;t,x) = A^{-1}\left(A(x) +  \int_t^s (u(\tau)-\Phi_{0}) d\tau\right)\,.
\]
We define $\gamma(t)$ for all $t\geq 0$ through 
\begin{equation} \label{eq:gamma_alpha=beta}
\gamma(t) :=\int_0^t (u(\tau)-\Phi_{0}) d\tau\,, \text{ and } x_{c}(t) = A^{-1}(\gamma(t))\,.
\end{equation}
The curve $x_{c}(t)$ corresponds to $X(t;0,0)$. In \cite{Calvo21}, it is proved for all $t>0$,
$\sigma_{t}$ is a $\Cc^{1}$-diffeomorphism form $(0,x_{c}(t))$ into $(0,t)$ and $X(0;t,\cdot)$ is
also a diffeomorphism from $(x_{c}(t),\infty)$ into $(0,\infty)$. These facts provide the following mild formulation, for any bounded $\vphi \in \Cc^{0}([0,\infty))$
\begin{equation}
\label{eq:dual_nonrigoureux}
\int_0^\infty f(t,x)\vphi(x)dx = \int_0^t \mathfrak n (u(s)) \vphi(\sigma_t^{-1}(s))ds+\int_0^\infty f^{in}(x)\vphi(X(t;0,x))dx\,.
\end{equation}

{Step 2: $\gamma(t)$ is bounded}. Note that 
\[X(t;0,x) =  A^{-1}\left(A(x) + \gamma(t)\right) \geq A^{-1}(\gamma(t)) \]
Hence, by \eqref{eq:dual_nonrigoureux} with $\vphi(x)=x$,
\[ \rho \geq \int_{0}^{\infty}xf(t,x)dx \geq \int_0^\infty f^{in}(x) X(t;0,x) dx \geq A^{-1}(\gamma(t)) \int_{0}^{\infty}f^{\rm in}(x) dx \,, \]
which proves our claim.

{Step 3: $u(t) \to \Phi_{0}$ as $t\to \infty$}. Note that $\gamma'(t) = u(t)-\Phi_{0}$ and
\[ u'(t) = - (u(t)-\Phi_{0}) M_{a}(t)\,.\]
It follows that $\Phi_{0}$ is a steady state for $u(t)$. Since
$u(0)>\Phi_{0}$ and $M_{a}(t)$ is continuous and non-negative (with $M_{a}(t)=0$ if and only if $f(t)=0$, that is, if and only if
$u(t)=\rho$), we obtain that $u(t)$ decreases and $u(t)>\Phi_{0}$ for all $t\geq 0$. Thus $u$ has a limit,
which is $\Phi_{0}$.

{Step 4: $\gamma(t)$ is strictly increasing and has a limit as $t\to \infty$}. The previous steps ensures
$\gamma$ is (strictly) increasing, because $\gamma'(t)=u(t)-\Phi_{0}>0$, and $\gamma$ is bounded, so
$\gamma$ has a limit
\begin{equation}
\label{eq:limit-g}
\overline\gamma = \lim_{t\to\infty}\gamma(t) \,.
\end{equation}

{Step 5: $x_{c}(t)$ has a limit as $t\to \infty$}. Combining the definition of $x_{c}(t) = A^{-1}(\gamma(t))$, the
continuity of $A^{-1}$ and the limit of $\gamma$, we obtain that $x_{c}(t)$ has a limit,
\begin{equation}
\label{eq:limit-xc}
\overline x_{c} = \lim_{t\to \infty} x_{c}(t) =A^{-1}(\overline \gamma)\,.
\end{equation}

{Step 6: $X(t;0,\cdot)$ has a limit as $t\to \infty$}. We remark that $X(t;0,x) = A^{-1}(A(x) + \gamma(t))$ for
$x>0$ which entails that $X(t;0,x)$ has a limit too, since $A$ is $\Cc^{1}$-diffeomorphism, and
\begin{equation}
\label{eq:limit-X}
\overline X(x) = \lim_{t\to\infty} X(t;0,x) = A^{-1}(A(x) + \overline \gamma)\,.
\end{equation}
This makes clear that $\overline X$ is a $\Cc^{1}$-diffeomorphism from $(0,\infty)$ into
$(\overline x_{c},\infty)$.

{Step 7: $\sigma_{t}^{-1}$ has a limit as $t\to \infty$}. Recall from Step 1 that $\lim_{s\to \sigma_{t}(x)} X(s,t,x) = 0$. Using the fact that
$\lim_{x\to \sigma^{-1}_{t}(s)}X(s;t,x) = 0$ -see \cite{Calvo21}- we deduce
\[ \sigma_{t}^{-1}(s) = A^{-1}\left( \int_{s}^{t} (u(\tau) - \Phi_{0}) d\tau\right) = A^{-1}(\gamma(t) - \gamma(s))\,.\]
and thus we have the limit
\[ \overline{\sigma}^{-1}(s) = \lim_{t\to\infty} \sigma_{t}^{-1}(s) =A^{-1}(\overline \gamma -  \gamma(s))\,.\]
We conclude, $\overline \sigma(x) = \gamma^{-1}(\overline \gamma - A(x))$ is a
$\Cc^{1}$-diffeomorphism from $(0,\overline x_{c})$ into $(0,t)$, with reciprocal
$\overline{\sigma}^{-1}$.

{Step 8: The limit density}. Let $\vphi \in \Cc^{0}([0,\infty))$ be  bounded; we insert it in
\eqref{eq:dual_nonrigoureux}. By the  limit in Step 7 and the dominated convergence theorem, we conclude that
\begin{multline}\label{eq:limrep}
  \lim_{t\to\infty}\int_0^\infty f(t,x)\vphi(x)dx = \int_0^\infty \mathfrak n (u(s)) \vphi( \overline{\sigma}^{-1}(s))ds+\int_0^\infty f^{in}(x)\vphi(\overline X(x))dx \\
  = \int_0^{\overline x_{c}} \mathfrak n (u(\overline \sigma(x))) | \overline \sigma'(x)| \vphi(x) dx + \int_{\overline x_{c}}^\infty f^{in}(\overline{X}^{-1}(x))|\overline{X}^{-1}{}'|\vphi(x)dx 
\end{multline}
which proves the desired result.
\end{proof}

\bigskip

{\bf Acknowledgements:} J. C. is partially supported by the spanish  MINECO-Feder (grant RTI2018-098850-B-I00) and Junta de Andaluc\'ia (grants PY18-RT-2422 and  A-FQM-311-UGR18). E. H. and R. Y. have been supported by ECOS-Sud project n. C20E03 (France -- Chile) and acknowledge financial support from the Inria Associated team ANACONDA.

\bibliographystyle{abbrv}
\bibliography{biblio}

\begin{figure}[!h]
  \includegraphics[width=0.49\linewidth]{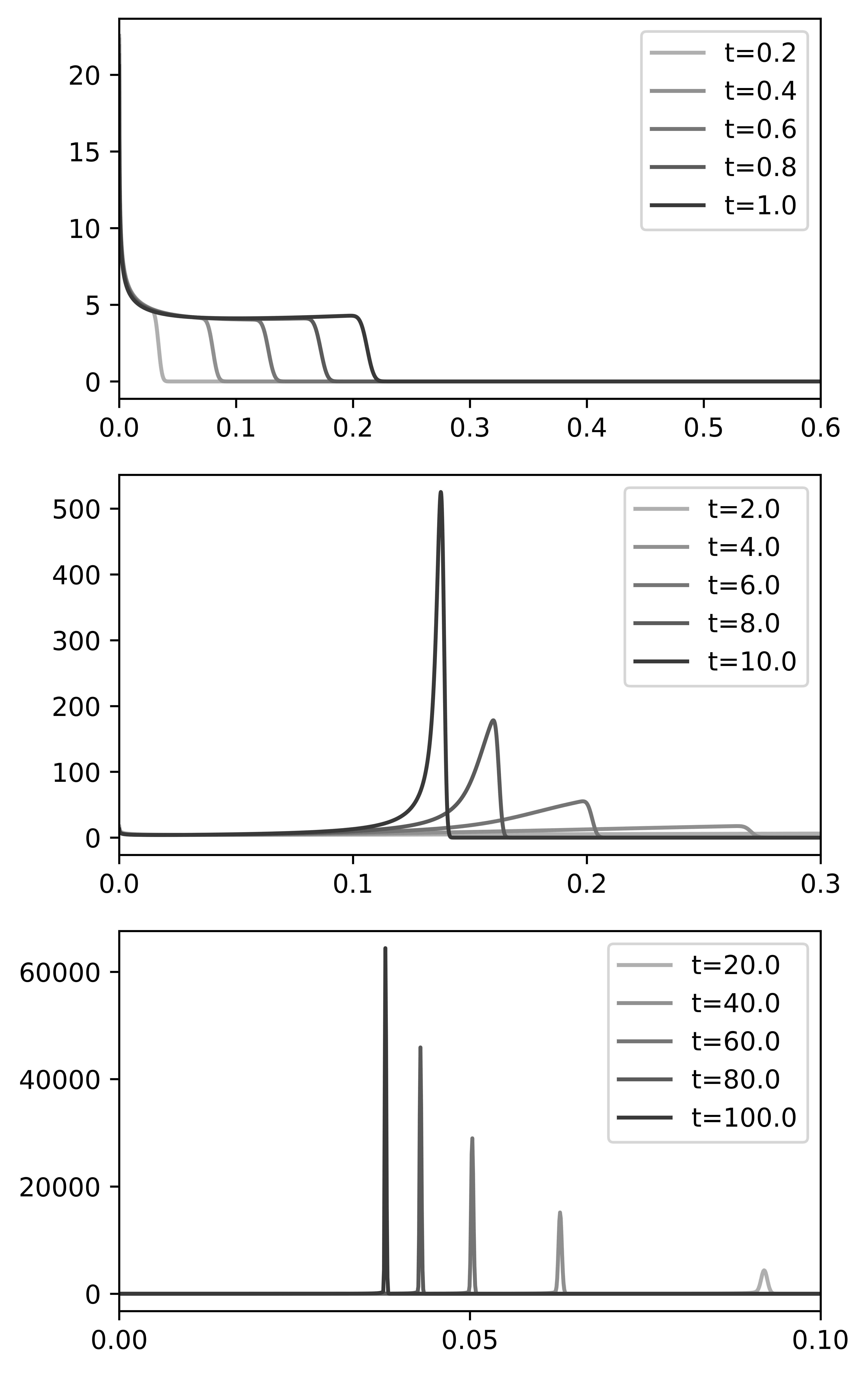}
  \includegraphics[width=0.49\linewidth]{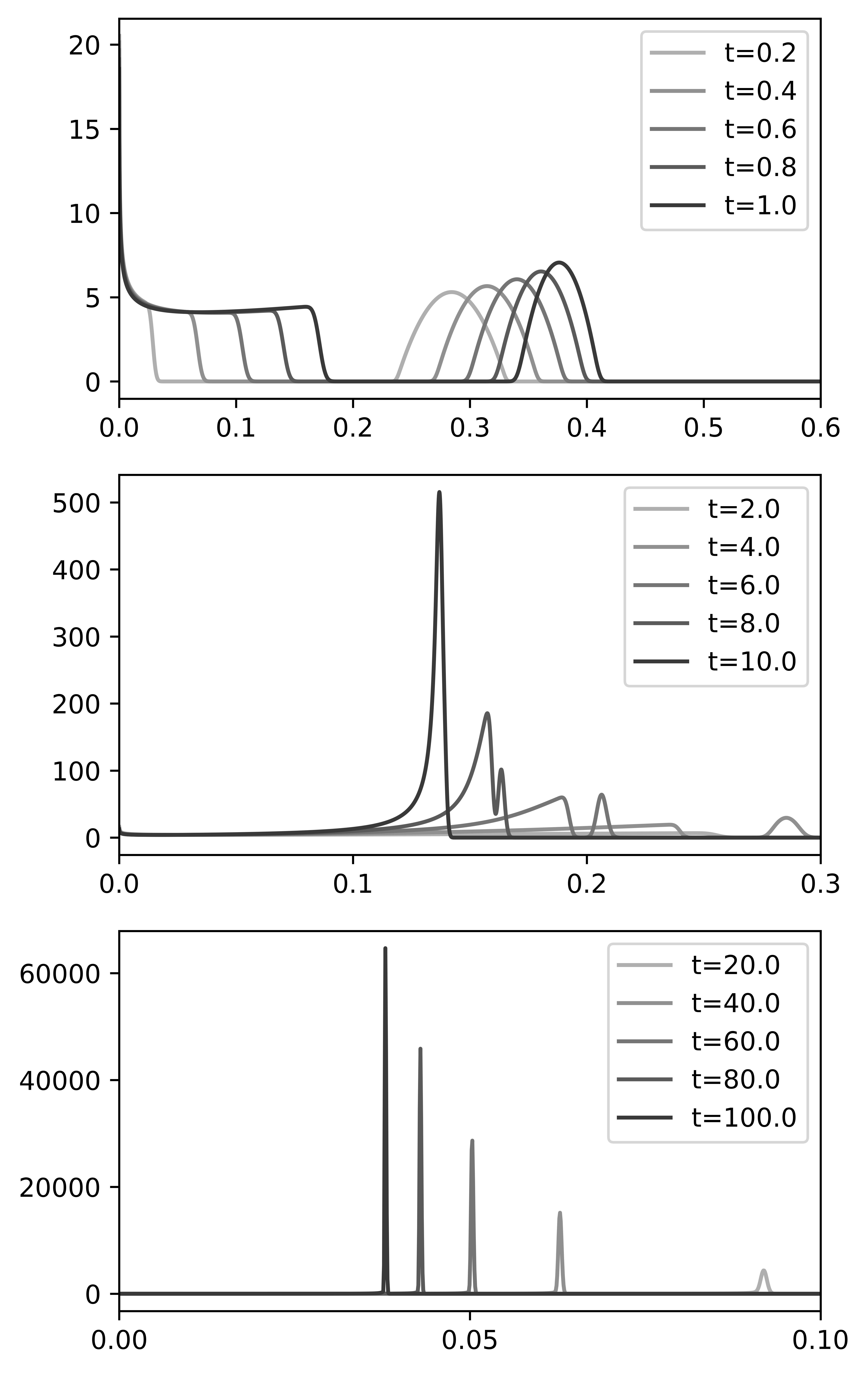}
 \caption{Distribution $f(t,x)$ with respect to $x$ at different times $t$. The rates are given by $a(x)=x^{1/3}$, $b(x) = x^{2/3}$ and $\mathfrak n(z)=z^{2}$. The mass total mass is $\rho=1$. Simulations were performed with a  finite volume scheme (upwind), with $\Delta t = 5\cdot 10^{-5}$ and $\Delta x = 10^{-4}$.  Left column: initial condition is $f^{\rm in}=0$; Right column: $f^{\rm in}(x) = (-2000(x-0.2)(x-0.3))_+$.}  \label{fig:1}
\end{figure}

\begin{figure}[!h]
    \centering
    \includegraphics[width=0.6\linewidth]{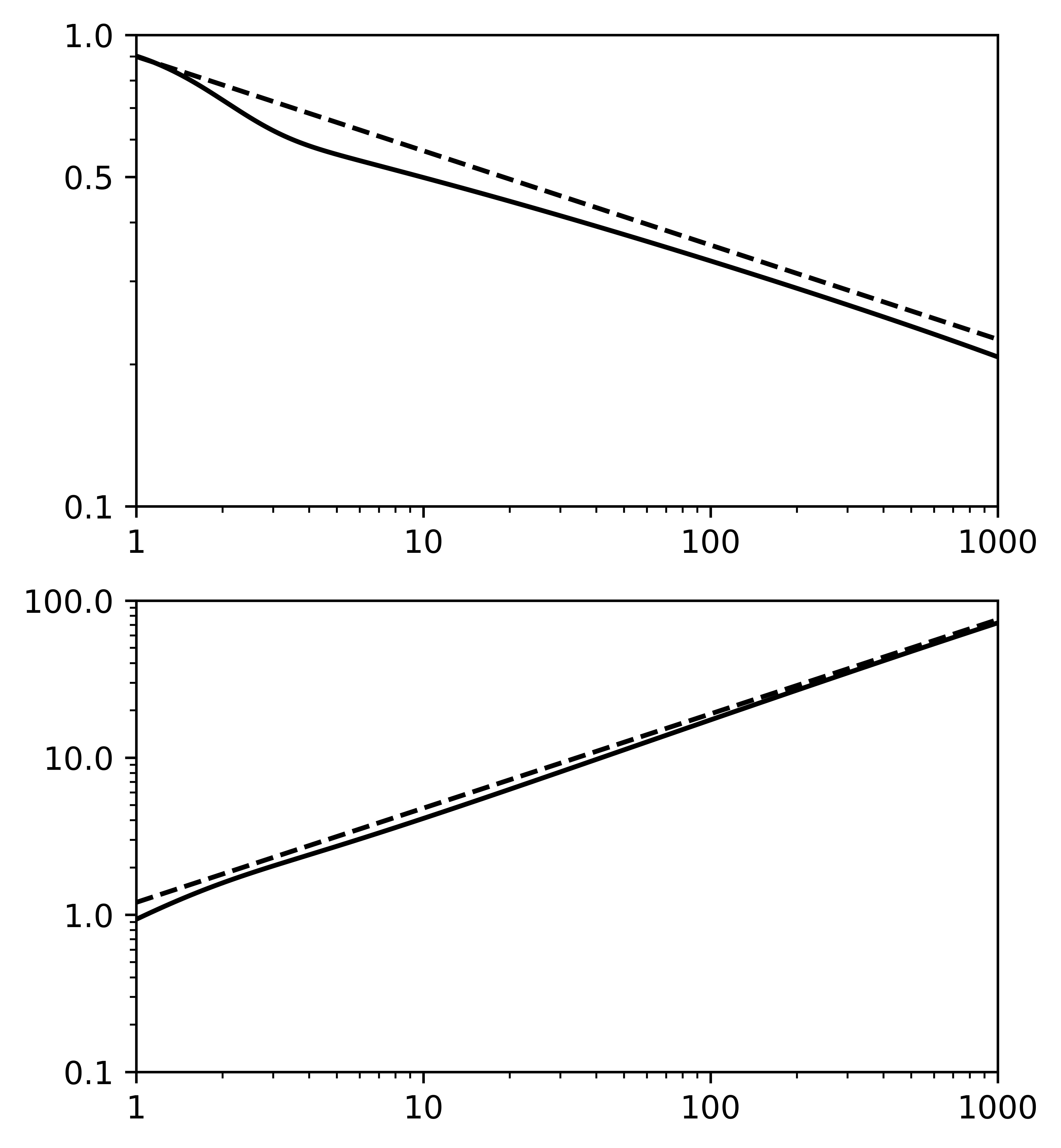}
    \caption{Up: $u(t)$ versus time $t$ in abscissa (straight line) and $t^{-1/5}$ (dashed line). Down: $M_{0}(t)$ versus time $t$ in abscissa (straight line) and $t^{3/5}$ (dashed line). Graphics are shown in log-log scale. Parameters are the same as Fig. \ref{fig:1} with $f^{\rm in}=0$.}   \label{fig:2}
\end{figure}

\begin{figure}[!h]
  \includegraphics[width=0.49\linewidth]{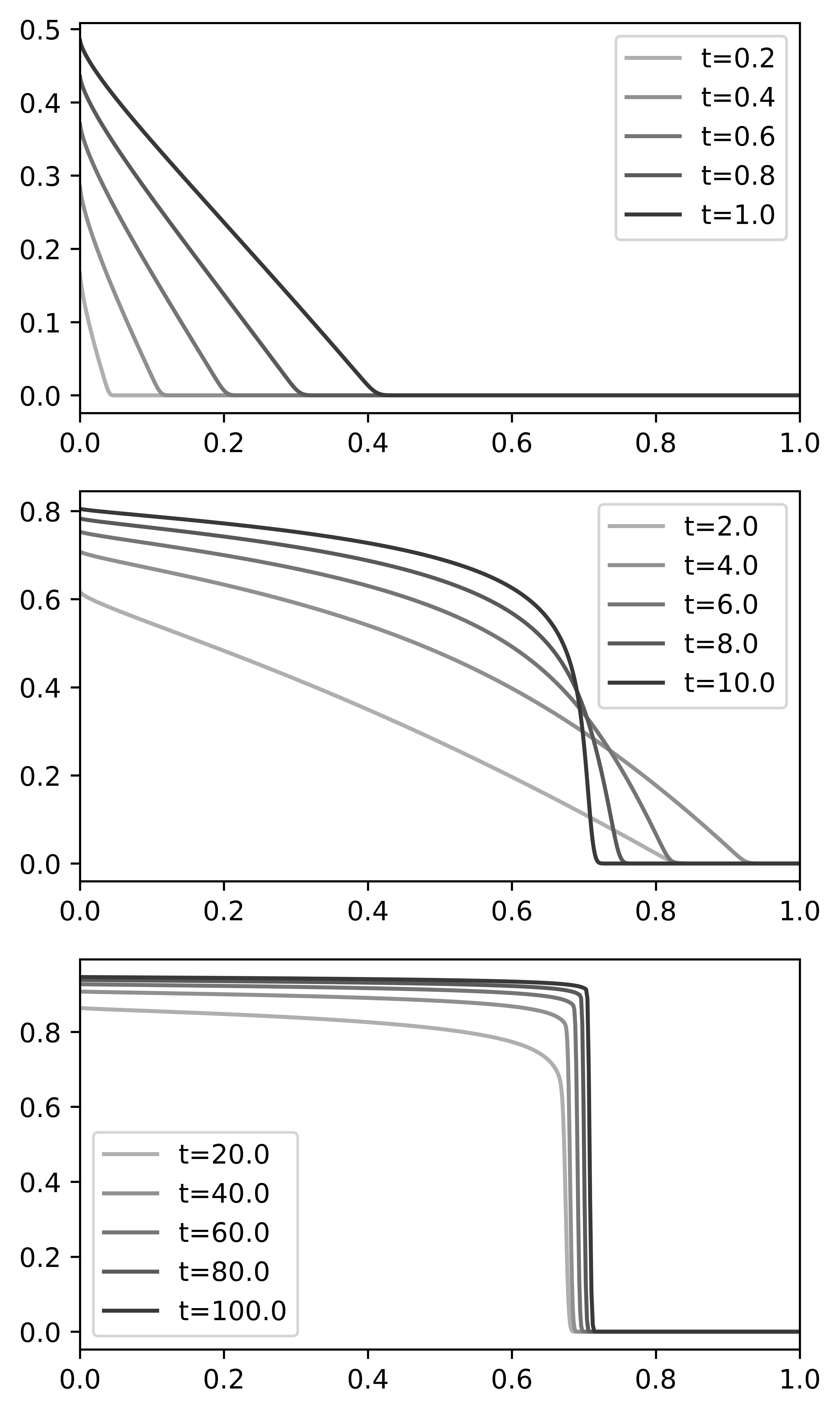}
  \includegraphics[width=0.49\linewidth]{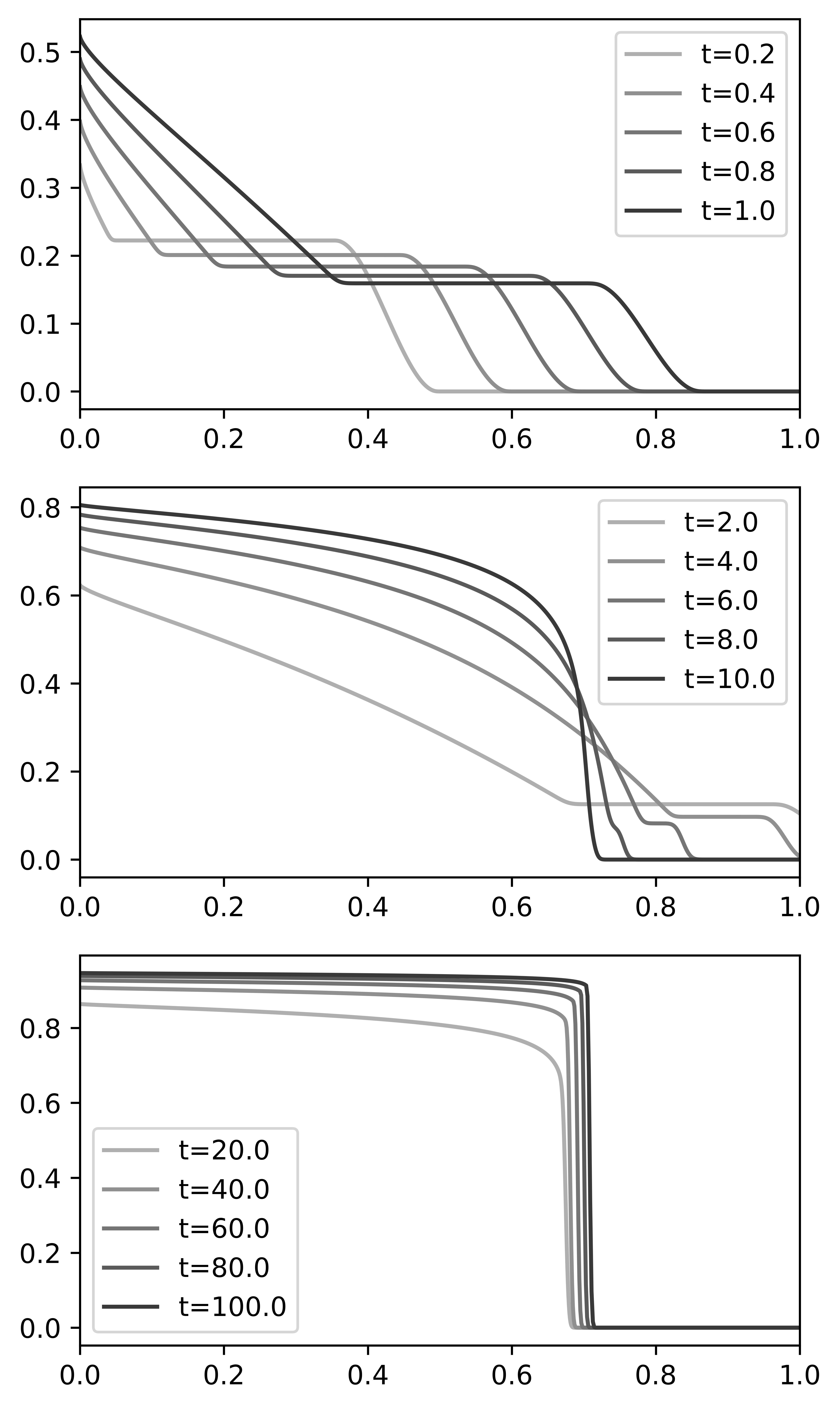}
 \caption{Normalised distribution $x\mapsto 1/(1+M_0(t)) F(t,x/(1+M_0(t))$ with $F(t,x)=\int_x^\infty f(t,x)dx$, with respect to $x$ at different times $t$. Parameters and simulations are performed as Fig. \ref{fig:1}.  Left column: initial condition is $f^{\rm in}=0$; Right column: $f^{\rm in}(x) = (-2000(x-0.2)(x-0.3))_+$.}\label{fig:3}
\end{figure}

\end{document}